\documentclass[a4paper,11pt]{article} 
\usepackage{amsthm}
\usepackage{amsthm, amsmath, amssymb, url,tikz}
\usepackage{hyperref}
\usepackage{pgfplots}
\pgfplotsset{compat=newest}
\usepackage{geometry}
\usepackage{mathrsfs}
\usepackage{caption}
\usepackage{graphicx}
\newtheorem{theorem}{Theorem}[section]
\newtheorem{lemma}[theorem]{Lemma}

\newtheorem{proposition}[theorem]{Proposition}
\newtheorem{corollary}[theorem]{Corollary}

\newtheorem{example}{Example}

\newtheorem{problem}{Problem}

\usepackage[numbers,sort&compress]{natbib}

\title{Graph Sensitivity of Cartesian Products with Matched Bridges}

\author{Zhen-Mu Hong$^{\,\rm a}$, Zi-Yi Wu$^{\,\rm a}$, Zheng-Jiang Xia$^{\,\rm b}$\thanks{Corresponding author. 
E-mail addresses: zmhong@mail.ustc.edu.cn (Z.-M. Hong), wuziyiaufe@163.com (Z.-Y. Wu), xzj@mail.ustc.edu.cn (Z.-J. Xia).}\\
{\footnotesize$^{\rm a}$School of Statistics and Applied Mathematics, Anhui University of Finance \& Economics,} \\{\footnotesize Bengbu 233030, China}\\
{\footnotesize$^{\rm b}$School of Finance, Anhui University of Finance \& Economics, Bengbu 233030, China}}

\date{}

\begin{document}

\maketitle

\parbox{14.5cm}{
\begin{center}
\textbf{Abstract}
\end{center}

For a graph $G$, let $f_t(G)$ denote the minimum of the maximum degree of an induced subgraph with $\alpha(G)+t$ vertices, where $\alpha(G)$ is the independence number, and write $f(G)=f_1(G)$. Huang’s theorem gives $f(Q_k)\ge\lceil\sqrt{k}\rceil$ for the $k$-dimensional hypercube $Q_k$. We extend this lower bound to Cartesian products of $k$ bipartite graphs with perfect matchings and prove that equality holds when the factors are connected and each has a matched bridge. In particular, we prove that $f(T_1\square\cdots\square T_k)=\lceil\sqrt{k}\rceil$ whenever each $T_i$ is a tree with a perfect matching. We determine the sensitivity of every Cartesian product of paths, settling the even-path case left open by Zeng and Hou [J. Graph Theory
107 (2024), 169--180]. For these tree products, with  $D=\lceil\sqrt{k}\rceil$, we also prove that $f_t(T_1\square\cdots\square T_k)=D$ whenever $1\le t\le 2^{D-\lceil\log_2D\rceil-1}$. When $t=2$, this equality holds for all $k\ge 2$, provided that at least one factor is not $K_2$. Matching cuts give an additional exact range for products of even-order paths. Finally, we prove that $f_2(Q_k)=\lceil\sqrt{k}\rceil$ for every $k\ge 2$ with $k\notin \{4,9\}$, whereas $f_2(Q_4)=3$ and $3\le f_2(Q_9)\le 4$.

\vskip0.4cm
\noindent{\bf Keywords:} Induced subgraph; Cartesian product; Sensitivity; Hypercube; Perfect matching

\vskip0.4cm \noindent {\bf AMS Subject Classification: }\ 05C76, 05C50, 05C70
}

\section{Introduction}

All graphs considered are finite and simple. For $U\subseteq V(G)$, let $G[U]$ denote the subgraph induced by $U$, and let $\alpha(G)$ and $\Delta(G)$ denote the independence number and maximum degree of $G$, respectively. We write $P_n$ for the path on $n$ vertices, $G_1\square G_2$ for the Cartesian product of $G_1$ and $G_2$, and $G^{\square k}$ for the Cartesian product of $k$ copies of $G$. Following a question of Huang~\cite{Huang2019}, for a graph $G$ containing at least one edge, define
\begin{equation}
 f(G)=\min\{\Delta(G[U]):U\subseteq V(G),\ |U|=\alpha(G)+1\}.
 \label{eq:def-f}
\end{equation}
The parameter measures the smallest maximum degree that can occur in a vertex set whose size exceeds the independence number by one.  For the $k$-dimensional
hypercube $Q_k=K_2^{\square k}$, the construction of Chung, F\"uredi, Graham, and
Seymour~\cite{CFGS1988}, together with Huang's theorem~\cite{Huang2019},
gives
\begin{equation}
 f(Q_k)=\lceil\sqrt{k}\rceil.
 \label{eq:cube-value}
\end{equation}
The lower bound in \eqref{eq:cube-value} is the graph-theoretic core of
Huang's proof of the Sensitivity Conjecture.  The quantity in
\eqref{eq:def-f} is also called the \textit{graph sensitivity} of $G$; see
Garc\'ia-Marco and Knauer~\cite{GaMK2022}.

The same question has subsequently been considered for Cayley graphs,
Hamming graphs, oriented cycles, and several graph products; see, for
example,~\cite{AlZh2020,HoLL2021,Tika2022,
AFGK2026,KrTo2026}.  Zeng and Hou~\cite{ZeHo2024}
studied the Cartesian powers of paths.  They proved
$f(P_3^{\square k})=2$, $f(P_{2r+1}^{\square k})=1$ for $r\ge 2$,
and obtained a spectral lower bound for $P_{2r}^{\square k}$, that is,
$$
f(P_{2r}^{\square k})\ge \left\lceil
  2\sin\!\left(\frac{\pi}{4r+2}\right)\sqrt{k}
 \right\rceil.
$$
They asked whether that lower bound is sharp when $r\ge3$.

Our first main result answers this question and determines the exact value.

\begin{theorem}\label{thm:even-paths-intro}
For all integers $r_1,\ldots,r_k\ge1$,
$$
 f(P_{2r_1}\square\cdots\square P_{2r_k})
 =\lceil\sqrt{k}\rceil.
$$
In particular,
$f(P_{2r}^{\square k})=\lceil\sqrt{k}\rceil$ 
for $r,k\ge 1$.

\end{theorem}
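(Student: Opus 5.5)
We prove the two inequalities separately, writing $G=P_{2r_1}\square\cdots\square P_{2r_k}$ and $N=|V(G)|=\prod_i 2r_i$.

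\textbf{Lower bound.} The plan is to reduce to Huang's signed-matrix argument. Each $P_{2r_i}$ has a perfect matching $M_i$, and its edges are the pairs $\{2j-1,2j\}$. These matchings give a spanning subgraph of $G$ isomorphic to the disjoint union of $\prod_i r_i$ copies of $Q_k$. Concretely, fix the ``block index'' $\lceil x_i/2\rceil$ in every coordinate; the block is the cube $Q_k$, and its edges are edges of $G$.

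Since $G$ is bipartite with a perfect matching, $\alpha(G)=N/2$. Take $U$ with $|U|=N/2+1$. By pigeonhole, some block $B\cong Q_k$ satisfies $|U\cap B|\ge 2^{k-1}+1$, because the blocks partition $V(G)$ into $N/2^k$ sets of size $2^k$. Huang's theorem applied inside $B$ gives a vertex of $U\cap B$ with at least $\lceil\sqrt k\rceil$ neighbours in $U\cap B$. These are also neighbours in $G[U]$, so $\Delta(G[U])\ge\lceil\sqrt k\rceil$.

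\textbf{Upper bound.} We need $U$ of size $N/2+1$ with $\Delta(G[U])\le D:=\lceil\sqrt k\rceil$. Call a matched edge a \emph{matched bridge} if it is a bridge whose removal leaves even components; in $P_{2r}$ the edge $\{1,2\}$ works. The idea is to start from the Chung--F\"uredi--Graham--Seymour set $S\subseteq V(Q_k)$ with $|S|=2^{k-1}+1$ and $\Delta(Q_k[S])\le D$. We place $S$ on the block $B_0=\{1,2\}^k$, and take $U=S\cup I$, where $I$ is an independent set of the remaining vertices. We choose $I$ so that $|I|=(N-2^k)/2$ and the edges between $I$ and $S$ are controlled.

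The natural choice is a parity class, which we now fix. We may relabel each path coordinate so that vertex $1$ is an endpoint. Let $I$ consist of the vertices outside $B_0$ whose coordinate sum is even (or a suitable chessboard parity). The complement of $B_0$ admits a perfect matching, for instance by matching along the first coordinate $i$ with $x_i\ge3$. Hence $V(G)\setminus B_0$ is balanced, and a parity class there has size exactly $(N-2^k)/2$.

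The obstacle is vertices $u\in S$ acquiring extra neighbours from $I$ across the block boundary, namely neighbours of the form $x_i=2\to3$. Also, $I$-vertices must not get more than $D$ neighbours in $S$.

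\textbf{Main obstacle and fix.} Each $u\in S$ with $u_i=2$ has at most one outside neighbour per coordinate, so the degree could grow by up to $k$. To fix this, I would exploit the freedom in the CFGS construction. The set $S$ can be taken as the even-parity half of $Q_k$ with a few modifications: Huang/CFGS build it from a set of ``sensitive'' parity-flip vertices. The bulk vertices of $S$ are even-parity, and their outside neighbours have odd parity, so they are not in $I$ if $I$ is the even class. Only the modified (odd-parity) vertices of $S$ can see $I$.

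I would therefore adjust the construction block by block. For coordinates where a modified vertex $u$ has $u_i=2$, I would switch the parity class of $I$ locally, on the component of $G$ beyond the bridge $\{2,3\}$ in direction $i$. Removing the bridge splits off an even path, so the whole product region $x_i\ge3$ is balanced and either colour class may be chosen there independently.

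The concrete plan is to take $I$ inside each region $R_i=\{x: x_i\ge3,\ x_j\le2\ \forall j<i\}$ as a colour class chosen so that the vertex of $R_i$ adjacent to the $\{1,2\}$-face has the parity opposite to the corresponding vertex of $S$. This is possible since $R_i$ is a product of even paths with $\{1,2\}$ factors. We must then check that $I$ remains independent across the boundaries between different $R_i$. This requires the parities to agree there, and that consistency check (possibly forcing one more local swap of the matched bridge) is where I expect the real work to lie.
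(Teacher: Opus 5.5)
Your lower bound is correct and is the paper's cube-partition argument (Lemma~\ref{lem:cube-cell-bound}). The upper bound is not established. The ``consistency check'' you defer is not merely unfinished: it rules out the framework $U=S\cup I$, with $S\subseteq B_0=\{1,2\}^k$, $|S|=2^{k-1}+1$, and $I$ independent of order $(N-2^k)/2$.

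\textbf{Why $I$ is forced.} $V(G)\setminus B_0$ is partitioned into the other cubes $\prod_i\{2j_i-1,2j_i\}$, each of independence number $2^{k-1}$. So $I$ must meet each of them in a maximum independent set, i.e.\ in one of its two parity classes. For $k\ge2$, adjacent cubes are joined by a perfect matching between facets that contain both parities, so they must use the same global parity. These cubes form a connected family. Hence $I$ is a global parity class of $V(G)\setminus B_0$, and no ``swap beyond a bridge'' is possible. Even inside a single region $R_i$, a colour class contains half of the shell vertices $u+e_i$, of both parities. Consequently every $u\in B_0$ of one parity (call it bad) has all of its outside neighbours $u+e_i$, with $u_i=2$, in $I$.

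\textbf{Why no $S$ works for $k\ge7$.} Put $x=u-\mathbf{1}\in\{0,1\}^k$, so the extra degree of $u$ is $|x|$. Let $B'$ be the bad vertices of $S$ and $R'$ the good vertices missing from $S$; then $|B'|=|R'|+1$. Each $x\in B'$ of weight $w$ needs at least $k-D+w$ neighbours in $R'$. Hence $w\le D$, and at least $k-D$ of its upper neighbours lie in $R'$. A vertex of weight $w+1$ has exactly $w+1$ lower neighbours, so $(w+1)|R'_{w+1}|\ge(k-D)|B'_w|$, writing $B'_w,R'_w$ for the weight-$w$ parts. When $k\ge2D+1$, which holds for every $k\ge7$, this gives $|R'_{w+1}|\ge|B'_w|$. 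The weights in $B'$ share a parity, so these levels are distinct, and summing gives $|R'|\ge|B'|$, a contradiction. For example, $P_4^{\square 7}$ has no set of your form.

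\textbf{The missing idea.} Cut at the matched edge $v_1v_2$ itself. Removing it leaves two \emph{odd} sides, $\{v_1\}$ and $\{v_2,\ldots,v_{2r}\}$, so your description of a matched bridge has this backwards. Your cut at $\{2,3\}$ leaves zero-sum sides, which is exactly what forces all the excess into $B_0$. Let $\tau_i$ indicate the side and put $\eta_i=(-1)^{\tau_i}\rho_i$. Then every $\tau$-fibre has $\eta$-sum $1$, so $\Psi=\chi(\tau_1,\ldots,\tau_k)\prod_i\eta_i$ has total sum $2$, with the excess spread over all fibres. Every non-bridge edge reverses $\Psi$. At each vertex there is at most one bridge edge per direction, and it is monochromatic exactly when the corresponding cube edge is monochromatic under $\chi$ (Lemma~\ref{lem:bridge-gate}, Theorem~\ref{thm:general-product}). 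The resulting positive class is not independent outside $B_0$, which is precisely the freedom your framework excludes.
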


The proof has two complementary parts.  The lower bound follows by partitioning the product into induced copies of $Q_k$ using perfect matchings of the factors. A set containing more than half of the product vertices must contain more than half of the vertices of one copy, so Huang's theorem applies. For the upper bound, we lift a cube coloring to the whole product using a binary map in each factor, while controlling edges between different copies.

The mechanism applies to much more than paths.  We call an edge a
\emph{matched bridge} if it is a bridge that belongs to a perfect matching.

\begin{theorem}\label{thm:trees-intro}
If $T_1,\ldots,T_k$ are trees with perfect matchings, then
$$
 f(T_1\square\cdots\square T_k)=\lceil\sqrt{k}\rceil.
$$
\end{theorem}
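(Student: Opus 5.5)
The proof splits into a lower bound and an upper bound, both following the mechanism sketched after Theorem~\ref{thm:even-paths-intro}.

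\textbf{Lower bound.} Let $G=T_1\square\cdots\square T_k$ and fix a perfect matching $M_i$ of each $T_i$. Since each $T_i$ is bipartite, $G$ is bipartite with a perfect matching, so $\alpha(G)=|V(G)|/2$. For every choice of edges $e_i\in M_i$, the vertex set $e_1\times\cdots\times e_k$ induces a copy of $Q_k$. Here we use that a tree has no $4$-cycles, so the only edges of $G$ inside this set are the obvious ones. These $\prod_i |M_i|$ copies partition $V(G)$. If $|U|=\alpha(G)+1$, averaging shows that some copy $C$ satisfies $|U\cap C|\ge 2^{k-1}+1$. By Huang's theorem, $G[U\cap C]$ has a vertex of degree at least $\lceil\sqrt{k}\rceil$, and so $\Delta(G[U])\ge\lceil\sqrt{k}\rceil$.

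\textbf{Upper bound.} In each $T_i$ pick a matched bridge $b_i=x_iy_i\in M_i$; every edge of a perfect matching of a tree is a bridge. Deleting $b_i$ splits $T_i$ into components $A_i\ni x_i$ and $B_i\ni y_i$. Both have even order, since $M_i\setminus\{b_i\}$ restricts to perfect matchings of $T_i-x_i-y_i$, whose components lie in $A_i-x_i$ and $B_i-y_i$, and these therefore have even order. Define the binary map $\phi_i:V(T_i)\to\{0,1\}$ by
\[
\phi_i(v)=\bigl(\mathbf 1[v\in B_i]+\operatorname{dist}(v,\{x_i,y_i\})\bigr)\bmod 2 .
\]
Equivalently, $\phi_i(v)=\sigma_i(v)$ on $A_i$ and $\phi_i(v)=1-\sigma_i(v)$ on $B_i$, where $\sigma_i$ is the proper $2$-colouring of $T_i$ with $\sigma_i(x_i)=0$. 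Then every edge $uv\ne b_i$ has $\phi_i(u)\ne\phi_i(v)$ flipped relative to the bipartition, while $b_i$ has $\phi_i(x_i)=0\ne\phi_i(y_i)=1$. More usefully, $\Phi=(\phi_1,\dots,\phi_k):V(G)\to V(Q_k)$ is a graph homomorphism in which an edge in coordinate $i$ maps to a cube edge.

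Now take the Chung--F\"uredi--Graham--Seymour set $S\subseteq V(Q_k)$ with $|S|=2^{k-1}+1$ and $\Delta(Q_k[S])=\lceil\sqrt k\rceil$. Let $I$ be the colour class of $G$ containing $(x_1,\dots,x_k)$. Set
\[
U=\bigl(I\setminus C\bigr)\cup \bigl(S\text{ placed on }C\bigr),
\]
where $C=\{x_1,y_1\}\times\cdots\times\{x_k,y_k\}$ is the special cube. In words, we replace $U$'s intersection with $C$ by $S$ and elsewhere use an independent half.

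\textbf{Main obstacle.} The key step is choosing the rest of $U$ so that vertices of $C$ gain no additional neighbours and vertices outside $C$ have bounded degree. Take
\[
U=\{v: v\notin C,\ \sigma(v)=0\}\cup S',
\]
where $S'\subseteq C$ corresponds to $S$. The set $U$ has size $\alpha+1$, because $I\cap C$ has exactly $2^{k-1}$ vertices.

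A vertex outside $C$ has neighbours in $U$ only inside $C$, and those lie in the "wrong" colour class. Such a neighbour differs from $C$ in exactly one coordinate $i$, where it sits at a neighbour of $x_i$ or $y_i$ other than each other. Its neighbours in $C$ number at most one, so its degree is at most $1\le\lceil\sqrt k\rceil$.

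A vertex of $S'$ may, however, have many neighbours outside $C$ in $I$, one for each coordinate and each extra tree-neighbour of $x_i$ or $y_i$. This is exactly where the matched-bridge structure is needed: we must swap colour classes on the $B_i$ sides. Concretely, I will use the colouring $\psi(v)=\sum_i\phi_i(v_i)\bmod 2$ on the complement of $C$, taking the class aligned so that vertices of $C$ in $S'$ see only opposite-class neighbours outside $C$. Verifying that such an alignment exists uniformly, which may require $S$ to be chosen of a single parity pattern together with $\Phi$-lifting $S$ to $\Phi^{-1}(S)$ rather than to $C$ only, is the step I expect to be hardest. The fallback is to define $U=\Phi^{-1}(S)$ adjusted on fibres, and bound degrees via the homomorphism $\Phi$, checking only edges between different fibres.
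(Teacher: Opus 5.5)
Your lower bound is correct and is the paper's argument (Lemma~\ref{lem:cube-cell-bound}). The appeal to the absence of $4$-cycles is not needed, since $e_1\times\cdots\times e_k$ induces $Q_k$ in any Cartesian product.

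The upper bound has a genuine gap: it is not established, and the constructions you write down fail. Your two definitions of $\phi_i$ disagree. For $v\in B_i$ one has $\operatorname{dist}(v,y_i)\equiv\sigma_i(v)+1 \pmod 2$, so the distance formula gives $\phi_i=\sigma_i$ on all of $T_i$. That is the plain bipartition: a homomorphism to $Q_k$ that forgets the bridge, and $\Phi^{-1}(S)$ then inherits the full tree degrees $\sum_{i:\,s^{(i)}\in S}\deg_{T_i}(v_i)$. The ``swapped'' version has $\phi_i(x_i)=\phi_i(y_i)=0$ and is not a homomorphism. Also, $A_i$ and $B_i$ have \emph{odd} order, not even. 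It is $A_i-x_i$ and $B_i-y_i$ that are perfectly matched, and this odd order is exactly what makes the count work.

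No single class on the complement of $C$ can be ``aligned'':
\begin{itemize}
\item With the plain bipartition, $S'$ has more than $2^{k-1}$ elements, so it contains corners of both parities. Some corner is then adjacent to all its non-bridge neighbours outside $C$, and their number is not bounded in terms of $k$.
\item With the swapped $\psi$, all corners have $\psi=0$, and $\{v\notin C:\psi(v)=0\}\cup S'$ does have $N/2+1$ vertices. But bridge edges preserve $\psi$. For example, take $T_1=P_4=v_1v_2v_3v_4$ with $b_1=v_1v_2$. Then $(v_4,x_2,\ldots,x_k)$ lies in $U$ together with all $k-1$ of its bridge neighbours, which exceeds $\lceil\sqrt k\rceil$ for $k\ge4$.
\end{itemize}

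The missing idea is to choose the class fibre by fibre. Let $\tau_i(v)=1$ exactly when $v\in B_i$, and put $F_a=\tau^{-1}(a)$ for $a\in\{0,1\}^k$. Each $F_a$ is a product of one side per factor and contains exactly one corner $c_a$ of $C$. Non-bridge edges stay inside a fibre and flip $\psi$; bridge edges join $F_a$ to $F_{a^{(i)}}$ and preserve $\psi$. By the odd-order fact, $|\{\psi=0\}\cap F_a|-|\{\psi=1\}\cap F_a|=1$.

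Let $\chi$ be a $\pm1$ colouring of $Q_k$ with $\sum\chi=2$. Set $U\cap F_a=\{\psi=0\}\cap F_a$ if $\chi(a)=1$, and $U\cap F_a=\{\psi=1\}\cap F_a$ if $\chi(a)=-1$. Then:
\begin{itemize}
\item $|U|=\sum_a(|F_a|+\chi(a))/2=N/2+1$;
\item $U\cap C$ is the positive class of $\chi$;
\item each $U\cap F_a$ is independent;
\item the $U$-neighbours of $v\in U\cap F_a$ are bridge neighbours in directions $i$ with $\chi(a^{(i)})=\chi(a)$, so $\deg_U(v)\le d_\chi(a)$.
\end{itemize}
This bound must hold for fibres of \emph{both} colours. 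So you need the two-sided form of the Chung--F\"uredi--Graham--Seymour colouring (Lemma~\ref{lem:cube-coloring}), not merely one set $S$ with $\Delta(Q_k[S])\le\lceil\sqrt k\rceil$. With $(-1)^{\phi_i}$ (your swapped map) playing the role of $\eta_i$, this is exactly the paper's colouring $\Psi$ from Lemma~\ref{lem:bridge-gate} and Theorem~\ref{thm:general-product}.
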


In a tree with a perfect matching, every edge of that matching is a bridge. Thus Theorem~\ref{thm:trees-intro} follows from the more general product theorem in Section~\ref{sec:product}, which also allows additional unbalanced bipartite factors; see
Theorem~\ref{thm:general-product}.

Combining Theorem~\ref{thm:even-paths-intro} with the odd-path constructions gives a complete answer for Cartesian products of paths, even when the paths have different numbers of vertices.

\begin{theorem}\label{thm:all-path-intro}
Let $n_1,\ldots,n_k\ge 2$ be integers, let
$G=P_{n_1}\square\cdots\square P_{n_k}$,
and let $e$ be the number of even integers among $n_1,\ldots,n_k$.  Then
$$
 f(G)=
 \begin{cases}
  \lceil\sqrt e\rceil, & e\ge1,\\
  2, & e=0\text{ and }n_1=\cdots=n_k=3,\\
  1, & e=0\text{ and at least one }n_i\ge5.
 \end{cases}
$$
\end{theorem}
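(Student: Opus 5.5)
The plan is to split by $e$ and treat the even-path factors with Theorem~\ref{thm:even-paths-intro} and the product-of-copies mechanism described after it. Reorder the factors so that $G=H\square O$, where $H=P_{n_1}\square\cdots\square P_{n_e}$ has all $n_i$ even and $O$ is the product of the odd paths ($O=K_1$ if $e=k$).

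\emph{Case $e\ge1$, lower bound.} Each even path has a perfect matching, so $H$ does. A perfect matching of $H$ lifts fibrewise to a perfect matching of $G$, hence $\alpha(G)=|V(G)|/2$. The fibres $H\times\{o\}$, $o\in V(O)$, partition $V(G)$ into induced copies of $H$. A set $U$ with $|U|=|V(G)|/2+1$ therefore contains more than half of some fibre, i.e.\ at least $\alpha(H)+1$ vertices of it. Then $\Delta(G[U])\ge f(H)=\lceil\sqrt e\rceil$ by Theorem~\ref{thm:even-paths-intro}.

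\emph{Case $e\ge1$, upper bound.} Let $D=\lceil\sqrt e\rceil$. Fix the optimal set $S\subseteq V(H)$ with $|S|=|V(H)|/2+1$ and $\Delta(H[S])\le D$ from Theorem~\ref{thm:even-paths-intro}, and let $X,Y$ be the colour classes of $H$. I will fill every fibre except one, $z$, with a colour class of $H$, and put $S$ in fibre $z$. The $k-e$ coordinates of $O$ contribute cross edges of the form $(h,z)(h,o')$ with $o'\sim z$, and these are what must be controlled.

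I will first try a folding approach, in the spirit of the binary maps used for Theorem~\ref{thm:even-paths-intro}. Fold each odd path $P_{2s+1}$ onto $K_2$ by the parity homomorphism, choose the corner $z$ of $O$ that lies in the larger class, and assign the class of $H$ in fibre $o$ according to the parity of $o$ twisted along the $H$-coordinates. The twist should be chosen so that every $O$-neighbour of $z$ receives, at each vertex $h\in S$, the class opposite to $h$. Equivalently, I will lift a CFGS-type construction from the cube $Q_e$ through the binary maps on the even factors and the folding maps on the odd factors, so that an odd factor behaves like a ``heavier'' side of $K_2$.

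The counting goes as follows. The larger class of $O$ exceeds the smaller by one. Hence $|X|\cdot|V(O)|$ plus the single extra vertex coming from $S$ gives exactly $|V(G)|/2+1$. The main obstacle is the cross-edge count at fibre $z$: a naive choice lets a vertex of $S$ pick up $k-e$ extra neighbours. So the real work is to arrange the classes in the fibres adjacent to $z$ (or to pick $S$ and $z$ compatibly) so that the cross degree is zero while the neighbouring fibres remain internally independent. If that fails, I will fall back on the general product theorem with unbalanced factors promised for Section~\ref{sec:product}.

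\emph{Case $e=0$.} Every factor is odd, so $G$ is bipartite with an unbalanced bipartition, and $\alpha(G)$ is the size of the larger class. Thus $f(G)\ge1$ trivially.

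If all $n_i=3$, Zeng and Hou's result gives $f(P_3^{\square k})=2$.

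If some $n_j\ge5$, I will adapt their construction for $f(P_{2r+1}^{\square k})=1$. Take the larger class and modify it inside the $P_{n_j}$ coordinate near one interior stretch of that path: remove suitable vertices and add vertices so that the net gain is one and the only induced edges form a matching. This requires checking that the modification is local in one coordinate and is unaffected by the lengths of the other odd factors.
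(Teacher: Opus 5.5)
\paragraph{The case $e\ge1$: the upper bound has a genuine gap.}
Your lower bound for $e\ge1$ is fine; it is the fibre form of the cube-partition argument of Lemma~\ref{lem:cube-cell-bound}. The upper bound is where the proof breaks, and the step you call the real work cannot be done.

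Every fibre other than $z$ holds a colour class of size $|V(H)|/2$, while $|S|=|V(H)|/2+1$. So whatever class you place in a fibre $o'\sim z$ meets $S$. Each common vertex $h$ then gets the extra edge $(h,z)(h,o')$ on top of $\deg_{H[S]}(h)$, which may already equal $D$. Shrinking those fibres to compensate loses vertices, and you can only recover them by making further fibres non-independent, which recreates the problem. Feeding folded odd coordinates into a cube colouring is no better: an interior vertex of an odd path has two neighbours that fold to the same cube neighbour.

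The missing idea is a \emph{two-sided} set: $S$ with $|S|=|V(H)|/2+1$ such that both $H[S]$ and $H[V(H)\setminus S]$ have maximum degree at most $D$. Theorem~\ref{thm:even-paths-intro} is a statement about $f$ and does not give this. It comes from the two-sided colouring of Lemma~\ref{lem:cube-coloring}, lifted through Lemma~\ref{lem:bridge-gate}.

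With such an $S$, put $S$ in every fibre over the larger class $O_+$ of $O$, and $V(H)\setminus S$ in every fibre over $O_-$. Adjacent fibres then carry complementary sets, so there are no cross edges at all. The count is $|U|=|V(G)|/2+(|O_+|-|O_-|)=|V(G)|/2+1$. This is where the unbalance of $O$ enters; your count never uses it.

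This is exactly Theorem~\ref{thm:general-product}, in which the odd factors act only through their bipartition characters $\zeta_j$. The paper's Proposition~\ref{prop:mixed-box} simply applies it. So your fallback is the proof. Once you check its hypotheses (even paths have perfect matchings, every path edge is a bridge, odd paths are unbalanced), you can discard the primary plan.

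\paragraph{The case $e=0$: two smaller gaps.}
First, $\alpha(G)$ equalling the larger bipartition class is not automatic for an unbalanced bipartite graph. It needs a matching that misses only one vertex, as in Lemma~\ref{lem:alpha-box}.

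Second, for some $n_j\ge5$ you only announce a modification. Zeng and Hou treat powers $P_{2r+1}^{\square k}$, so for mixed odd lengths the construction has to be written down. With $n_j=2s+1$, the paper colours $P_{n_j}$ by the sign word $++-++(-+)^{s-2}$, which has sign sum $3$ and monochromatic runs of length at most two. Every other factor gets its alternating character.

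Equivalently, this complements the larger bipartition class of $G$ inside the slices $x_j\in\{2,3,4\}$:
\begin{itemize}
\item slices $2$ and $4$ gain one vertex each and slice $3$ loses one, giving $(N+3)/2=\alpha(G)+1$ vertices;
\item the only induced edges join slices $1,2$ and slices $4,5$ in direction $j$, so they form a matching.
\end{itemize}
This is what your ``interior stretch'' should be. The check you list goes through because the other factors enter only through a character with sign sum $\pm1$.

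Citing Zeng and Hou for $P_3^{\square k}$ is acceptable; the paper re-proves it using anticommuting signed adjacency matrices and interlacing.
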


There is a direct connection with dissociation sets.  If
$\operatorname{diss}(G)$ denotes the largest order of an induced subgraph of
maximum degree at most one, then
$$
 f(G)=1 \text{ if and only if }
 \operatorname{diss}(G)\ge \alpha(G)+1.
$$
Moreover, $\operatorname{diss}(G)=|V(G)|-\psi_3(G)$, where $\psi_3$ is the
minimum size of a vertex subset meeting every path of order three (see~\cite{BJKST2013,JaTa2013,LiZu2018}).  Exact formulas for $\psi_3(P_m\square P_n)$ obtained by Bre\v{s}ar et al.~\cite{BJKST2013}, combined with elementary
constructions, therefore already determine
$f(P_m\square P_n)$. The two-dimensional cases are included below for completeness. The results here extend the formula to arbitrary products of paths of different orders and place it within a product theorem for bipartite factors with matched bridges.

We next consider vertex sets whose size exceeds the independence number by a prescribed positive integer $t$. For an integer $t\ge1$ for which $\alpha(G)+t\le |V(G)|$, put
\begin{equation}
 f_t(G)=\min\{\Delta(G[U]):U\subseteq V(G),\ |U|=\alpha(G)+t\}.
 \label{eq:def-ft}
\end{equation}
This fixed-excess question was recorded as Question~5.1 by Zeng and Hou~\cite{ZeHo2024},
following an oral question of Huang.  It is called \textit{$k$-sensitivity} in
Definition~2.7 of Kriloff and Tolman \cite{KrTo2026}, with their $k$ playing the role of our excess $t$. Our construction determines $f_t$ for these tree products whenever the number of factors is sufficiently large relative to $t$, with the following explicit range.

\begin{theorem}\label{thm:fixed-excess-intro}
Let $k\ge1$, let $T_1,\ldots,T_k$ be trees with perfect matchings, and put $D=\lceil\sqrt{k}\rceil$. 
\begin{enumerate}
 \item[$(i)$] If $1\le t\le 2^{D-\lceil\log_2D\rceil-1}$, then $f_t(T_1\square\cdots\square T_k)=\lceil\sqrt{k}\rceil$.
 \item[$(ii)$] If $k\ge2$ and at least one $T_i$ is not $K_2$, then $f_2(T_1\square\cdots\square T_k)=\lceil\sqrt{k}\rceil$.
\end{enumerate}
\end{theorem}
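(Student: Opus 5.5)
The proof splits into a lower bound and an upper bound. For the lower bound, observe that $f_t$ is nondecreasing in $t$: deleting a vertex from a set of size $\alpha(G)+t$ never increases the maximum degree, so $f_t(G)\ge f_1(G)$. Theorem~\ref{thm:trees-intro} then gives $f_t(G)\ge\lceil\sqrt k\rceil=D$ for every admissible $t$, where $G=T_1\square\cdots\square T_k$. All the work is in the upper bound. There we must exhibit a set $U$ with $|U|=\alpha(G)+t$ and $\Delta(G[U])\le D$. Each $T_i$ is bipartite with a perfect matching, so $G$ is bipartite and balanced, and $\alpha(G)=|V(G)|/2$.

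For (i) I would refine the lifting construction behind Theorem~\ref{thm:trees-intro}. First, recall the Chung--F\"uredi--Graham--Seymour construction in $Q_k$: a set of size $2^{k-1}+1$ with induced maximum degree $D$. It is built from a vertex subset of $Q_k$ whose ``excess over half'' is created inside a subcube of dimension roughly $D$, while every vertex sees at most $D$ neighbours. The key point is that the construction has slack. If the $k$ coordinates are grouped into $D$ blocks of size at most $D$ (CFGS style), the extra vertices live near a small number of special points. I expect one can place up to $2^{D-\lceil\log_2 D\rceil-1}$ such defects, pairwise far apart in some auxiliary $(D-\lceil\log_2 D\rceil)$-dimensional cube of coordinates, so that their neighbourhoods do not interact and the degree bound $D$ is preserved. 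Concretely, I would index the defects by a binary code in the remaining coordinates, using $\lceil\log_2 D\rceil$ coordinates as ``guard'' directions. This gives $f_t(Q_k)\le D$ in that range. Next, I would transfer this to $G$ by the paper's lifting: use the binary map in each tree that sends a matched bridge to the $K_2$ factor, with the two sides of the bridge mapped to $0$ and $1$ in a bipartition-compatible way. Each cube vertex is pulled back, and the standard half-set (one colour class) is used away from a chosen copy of $Q_k$. The excess $t$ is gained only inside that copy, and the cross edges are controlled exactly as in the proof of Theorem~\ref{thm:trees-intro}.

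For (ii), $t=2$ is needed for all $k\ge2$, including small $k$ where the range in (i) fails. The idea is to exploit a factor $T_j\ne K_2$. Such a tree has at least two disjoint matched bridges, so the product splits into at least two vertex-disjoint induced pieces, each containing a copy of the $Q_k$-lift. We place a one-excess construction, from the proof of Theorem~\ref{thm:trees-intro}, in two different copies that are separated in the $T_j$ coordinate. Each copy contributes excess $1$, giving $|U|=\alpha+2$. The remaining vertices are filled with a colour class, chosen consistently so that no vertex gains degree across the boundary between the two regions. This separation argument is the crux. One must check that the colour-class filling adjacent to each defect region gives no vertex more than $D$ neighbours in $U$. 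This probably requires choosing the two copies at distance at least $2$ or $3$ in $T_j$, or flipping the parity of the filling between them along a matching cut of $T_j$.

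I expect the main obstacle to be the cube-level step of (i): constructing $2^{D-\lceil\log_2D\rceil-1}$ independent defects in $Q_k$ with maximum degree still $D$. The degree budget is tight near each defect, so I would first try to verify the construction for small $D$ before trusting the general coding scheme.
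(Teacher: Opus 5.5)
Your lower bound is correct: deleting $t-1$ vertices gives $f_t\ge f_1$, and Theorem~\ref{thm:trees-intro} gives $f_1=D$. The paper instead applies Lemma~\ref{lem:cube-cell-bound} directly, which is the same argument. The gap is that neither upper bound, which is the whole content of the theorem, is actually constructed.

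\textbf{Part (i).} ``I expect one can place up to $2^{D-\lceil\log_2 D\rceil-1}$ such defects'' is exactly what must be proved. The transfer step also does not work as described.

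If you put $W\subseteq C$ with $|W|=2^{k-1}+t$ inside one copy $C$ of $Q_k$ and a bipartition class elsewhere, then $W$ contains at least $t$ vertices of the other sign. Each of these is adjacent in $U$ to all of its neighbours off $C$, and nothing in your plan bounds that.

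If you mean the lift $\Psi=\chi(\tau_1,\dots,\tau_k)\prod_i\eta_i$ of Theorem~\ref{thm:general-product}, the imbalance does transfer, since $\sum\Psi=\sum\chi$. However, once a factor is not $K_2$, some $\tau$-fibres contain both $\eta$-signs. So $\{\Psi=1\}$ contains vertices lying over cube vertices of both $\chi$-colours, and you need monochromatic degree at most $D$ in \emph{both} classes of $\chi$. The one-sided statement $f_t(Q_k)\le D$ does not give this.

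The missing idea is Lemma~\ref{lem:separated-blocks}:
\begin{itemize}
\item Split the coordinates into $q\le D$ blocks $X_j$.
\item Choose sets $Z_j\subseteq V(X_j)$ of vertices of one bipartition sign, each of degree at most $D$, pairwise at distance at least three.
\item Colour by $\rho\cdot(-1)^F$ with $F=\prod_j b_j$, where $b_j=0$ exactly on $Z_j$.
\end{itemize}
The imbalance is $2\prod_j|Z_j|$. When no block vanishes there are still at most $q\le D$ sensitive edges, because distinct points of $Z_j$ have no common neighbour.

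The count $2^{D-m-1}$, with $m=\lceil\log_2D\rceil$, comes from the even-weight code $\{c\in\mathbb F_2^D:\sum_i c_i=0,\ \sum_i c_ih_i=0\}$ with distinct $h_i\in\mathbb F_2^m$, which has minimum distance four. It does not come from an auxiliary $(D-\lceil\log_2D\rceil)$-cube with ``guard directions''. Equal parity is essential, because defects of opposite sign cancel.

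The paper applies this directly to all-leaf tuples of the trees. Applied to $Q_k$ instead, it gives exactly the two-sided colouring your lift needs. So your cube-then-lift route can be completed, and it would even give (i) for arbitrary factors with a matched bridge.

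\textbf{Part (ii).} You are right that the non-$K_2$ factor must be used. No two-sided colouring of $Q_4$ with imbalance four and monochromatic degree two exists, since $f_2(Q_4)=3$. But gluing two one-excess lifts fails as stated.

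Take $T_1=P_4=abcd$ with matching $\{ab,cd\}$. The two matched bridges are joined by the single edge $bc$, so they cannot be placed at distance two or three. Suppose $\{a,b\}\times Y$ and $\{c,d\}\times Y$ are coloured by lifts of the same $\chi$. Since $\eta=1$ at bridge endpoints, every boundary edge $(b,y)(c,y)$ is monochromatic when $\tau(b)=\tau(c)$. When $\tau(b)\ne\tau(c)$, making them all bichromatic forces $\chi(0,\cdot)=-\chi(1,\cdot)$, i.e.\ $\sum\chi=0$. Flipping one side's colouring to repair the boundary turns its imbalance from $+2$ into $-2$.

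The paper again uses a single colouring from Lemma~\ref{lem:separated-blocks}. It takes a block $I_1$ of size $D$ containing indices $1$ and $2$, and lets $Z_1$ consist of two all-leaf tuples whose first coordinates are distinct leaves $u,v$ of $T_1$. These are at distance at least three because $T_1\ne K_2$ (Lemma~\ref{lem:pm-tree-leaves}).

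The point you do not address is the sign. If $\rho_1(u)\ne\rho_1(v)$, the paper puts leaves of $T_2$ from opposite classes in the second coordinate, so both tuples have the same $\rho$-sign. Without this the two defects cancel and the imbalance is zero.
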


Thus, for every fixed $t$, Theorem~\ref{thm:fixed-excess-intro} determines $f_t$ for all products of sufficiently many trees with perfect matchings, uniformly over the choice of the factors.  A second construction, based on matching
cuts, gives exact values for even paths throughout the range
$$
 f_t(P_{2r_1}\square\cdots\square P_{2r_k})
 =\lceil\sqrt{k}\rceil
 \quad\text{whenever}\quad
1\le t\le\prod_{i=1}^k\left\lceil\frac{r_i}{3}\right\rceil.
$$
For hypercubes of dimension $k\ge2$, we obtain
$$
 f_2(Q_k)=\lceil\sqrt{k}\rceil \text{ for } k\ne4,9, 
 \quad f_2(Q_4)=3,
$$
while for the remaining case $k = 9$ we have
$3\le f_2(Q_9)\le4$.

The paper is organized as follows. Section~\ref{sec:cube} introduces
Boolean sensitivity and the cube coloring used in our constructions.
Section~\ref{sec:product} establishes the cube-partition lower bound and
the matched-bridge product theorem, proving Theorem~\ref{thm:trees-intro}. Section~\ref{sec:paths} determines
the sensitivity of arbitrary Cartesian products of paths, which yields Theorem~\ref{thm:all-path-intro}.
Section~\ref{sec:fixed-excess} develops the matching-cut and separated-set
constructions for fixed excess, proving Theorem~\ref{thm:fixed-excess-intro}.
Section~\ref{sec:cube-excess-two} treats the $2$-sensitivity of the hypercube.
Section~\ref{sec:discussion} discusses the scope of the methods and the
remaining questions.

\section{Boolean sensitivity and hypercube colorings}\label{sec:cube}

For $x=(x_1,\ldots,x_n)\in\{0,1\}^n$, let
$|x|=\sum_{i=1}^n x_i$ denote its Hamming weight, and let
$x^{(i)}$ be the vector obtained by flipping its $i$-th coordinate. Write $[n]=\{1,\ldots,n\}$ and $\operatorname{supp}(x)=\{i\in[n]:x_i=1\}$.
For a Boolean function $F:\{0,1\}^n\to\{0,1\}$, its sensitivity
at $x$ and its sensitivity are defined by
$$
 s(F,x)=\bigl|\{i\in[n]:F(x^{(i)})\ne F(x)\}\bigr|,
 \quad
 s(F)=\max_{x\in\{0,1\}^n}s(F,x).
$$
Thus $s(F,x)$ counts the neighbors of $x$ in $Q_n$ at which
$F$ takes a different value. We also write
$$
 s_\varepsilon(F)
 =\max_{\substack{x\in\{0,1\}^n\\F(x)=\varepsilon}}s(F,x),
 \quad \varepsilon\in\{0,1\},
$$
where the maximum over an empty set is taken to be zero. Thus $s(F)=\max\{s_0(F),s_1(F)\}$.

Define the alternating sum of $F$ by
$$
 A(F)=\sum_{x\in\{0,1\}^n}(-1)^{|x|}F(x).
$$
For Boolean functions $F$ and $H$ on disjoint sets of variables,
their conjunction is given by $(F\wedge H)(x,y)=F(x)H(y)$.
Summing separately over the two sets of variables gives
$$
 A(F\wedge H)=A(F)A(H).
$$
Moreover,
$$
 s_0(F\wedge H)\le\max\{s_0(F),s_0(H)\},
 \quad
 s_1(F\wedge H)\le s_1(F)+s_1(H).
$$
Indeed, if both factors vanish, changing one coordinate cannot change
their conjunction. If exactly one factor vanishes, only changes in
that factor can affect the output. If both factors equal one, the
sensitive coordinates come from the two factors separately.

A basic example is the Boolean OR function
$$
 \operatorname{OR}_n(x)=1-\prod_{i=1}^n(1-x_i),\quad n\ge1,
$$
which is zero precisely at the all-zero vector. It satisfies
$$
 s_0(\operatorname{OR}_n)=n,\quad
 s_1(\operatorname{OR}_n)=1,\quad A(\operatorname{OR}_n)=-1.
$$

Write $x\sim y$ when $x$ and $y$ are adjacent in $Q_k$.  If
$\chi:V(Q_k)\to\{-1,1\}$, the \emph{monochromatic degree} of $x$ is
$$
 d_\chi(x)=|\{y:y\sim x,\ \chi(y)=\chi(x)\}|.
$$
The \emph{imbalance} of a coloring $\chi$ is the absolute difference between the sizes of its two color classes, equivalently $\left|\sum_x\chi(x)\right|$.

The following observation converts Boolean functions into cube colorings.

\begin{lemma}\label{lem:parity-twist-excess}
Let $n\ge1$ and let $F:\{0,1\}^n\to\{0,1\}$. Define
$\chi_F(x)=(-1)^{|x|+F(x)}$. Then
$$
 d_{\chi_F}(x)=s(F,x)\quad\text{for every }x,
 \quad \sum_x\chi_F(x)=-2A(F).
$$
Consequently, one color class has $2^{n-1}+|A(F)|$ vertices and induces a subgraph of maximum degree at most $s(F)$. If $A(F)\ne0$, then
$f_{|A(F)|}(Q_n)\le s(F)$.
\end{lemma}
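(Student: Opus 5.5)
The plan is to check the two identities directly from the definition of $\chi_F$ and then read off the consequences.

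\emph{Monochromatic degree.} Let $y=x^{(i)}$ be a neighbor of $x$ in $Q_n$. Then $|y|=|x|\pm1$, so
$$
\chi_F(y)\chi_F(x)=(-1)^{|x|+|y|+F(x)+F(y)}=-(-1)^{F(x)+F(y)}.
$$
Hence $\chi_F(y)=\chi_F(x)$ holds exactly when $F(y)\ne F(x)$, that is, when coordinate $i$ is sensitive for $F$ at $x$. Counting over $i\in[n]$ gives $d_{\chi_F}(x)=s(F,x)$.

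\emph{Color sum.} Write $(-1)^{F(x)}=1-2F(x)$, which is valid since $F(x)\in\{0,1\}$. Then
$$
\sum_x\chi_F(x)=\sum_x(-1)^{|x|}-2\sum_x(-1)^{|x|}F(x)=0-2A(F),
$$
because $\sum_x(-1)^{|x|}=(1-1)^n=0$ for $n\ge1$.

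\emph{Consequences.} Let the color classes have sizes $a$ and $b$. Then $a+b=2^n$ and $a-b=-2A(F)$, so the larger class has $2^{n-1}+|A(F)|$ vertices. Inside a color class, every edge joins two vertices of the same color. So the degree of $x$ in the induced subgraph is $d_{\chi_F}(x)=s(F,x)\le s(F)$.

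Since $Q_n$ is bipartite with equal sides, $\alpha(Q_n)=2^{n-1}$. If $A(F)\ne0$, the larger class is a set $U$ with $|U|=\alpha(Q_n)+|A(F)|$ and $\Delta(Q_n[U])\le s(F)$. Moreover $|U|\le 2^n$, so $f_{|A(F)|}(Q_n)$ is defined and at most $s(F)$.

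There is no real obstacle. The only points needing care are the sign bookkeeping (the extra factor of $-1$ from the parity flip is what turns ``sensitive'' into ``same color'') and the value $\alpha(Q_n)=2^{n-1}$.
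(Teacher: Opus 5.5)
Your proof is correct and takes essentially the same route as the paper: the parity flip across each cube edge turns a sensitive coordinate into a monochromatic edge, and writing $(-1)^{F(x)}=1-2F(x)$ together with $\sum_x(-1)^{|x|}=0$ gives $\sum_x\chi_F(x)=-2A(F)$. You also state explicitly two details the paper leaves implicit, namely $\alpha(Q_n)=2^{n-1}$ and that $f_{|A(F)|}(Q_n)$ is defined because $|U|\le 2^n$.
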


\begin{proof}
Across a cube edge the parity of $|x|$ changes. The endpoints therefore
receive the same $\chi_F$-color exactly when $F$ changes value across the
edge, proving the degree assertion. Since $(-1)^{F(x)}=1-2F(x)$ and the
parity character has sum zero,
$$
 \sum_x\chi_F(x)
 =\sum_x(-1)^{|x|}\bigl(1-2F(x)\bigr)=-2A(F).
$$
The two color classes thus differ in size by $2|A(F)|$, proving the
remaining assertions.
\end{proof}

The next lemma is the two-sided form of the classical construction
of Chung, F\"uredi, Graham, and Seymour~\cite{CFGS1988}, in which both color classes have small induced maximum degree and their sizes differ by two; see also
Asensio et al.~\cite{AFGK2026}. We include the proof for completeness.

\begin{lemma}\label{lem:cube-coloring}
For every $k\ge1$, there is a function
$\chi:\{0,1\}^k\to\{-1,1\}$ such that
$$
 \sum_{x\in\{0,1\}^k}\chi(x)=2
 \quad\text{and}\quad
 d_\chi(x)\le\lceil\sqrt{k}\rceil
 \quad\text{for every }x.
$$
\end{lemma}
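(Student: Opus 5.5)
We will obtain $\chi$ from Lemma~\ref{lem:parity-twist-excess} by building a Boolean function $F$ on $\{0,1\}^k$ with $s(F)\le\lceil\sqrt{k}\rceil$ and $A(F)=-1$. Then $\chi=\chi_F$ satisfies $d_\chi(x)=s(F,x)\le\lceil\sqrt k\rceil$ for every $x$ and $\sum_x\chi(x)=-2A(F)=2$, which is the claim.

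The construction follows Chung, F\"uredi, Graham, and Seymour. Put $D=\lceil\sqrt k\rceil$, so $k\le D^2$. Partition $[k]$ into at most $D$ nonempty blocks $B_1,\ldots,B_m$, each of size at most $D$; this is possible because $k\le D^2$, and we may take $m=\lceil k/D\rceil\le D$. On the variables of block $B_j$ place $\operatorname{OR}_{|B_j|}$. Let $G$ be the conjunction $\operatorname{OR}_{|B_1|}\wedge\cdots\wedge\operatorname{OR}_{|B_m|}$, a function on disjoint variable sets. The conjunction rules recorded above give
\[
A(G)=\prod_{j=1}^m A(\operatorname{OR}_{|B_j|})=(-1)^m,
\]
\[
s_0(G)\le\max_j s_0(\operatorname{OR}_{|B_j|})=\max_j|B_j|\le D,
\]
\[
s_1(G)\le\sum_{j=1}^m s_1(\operatorname{OR}_{|B_j|})=m\le D.
\]
Hence $s(G)\le D$ and $|A(G)|=1$.

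It remains to fix the sign. If $A(G)=-1$, take $F=G$. If $A(G)=+1$, take $F=1-G$. Negating $F$ leaves every sensitivity $s(F,x)$ unchanged. It changes $A$ to $\sum_x(-1)^{|x|}-A(G)=-A(G)=-1$, because the parity character sums to zero for $k\ge1$. Either way $A(F)=-1$ and $s(F)\le D$, and Lemma~\ref{lem:parity-twist-excess} finishes the proof.

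The only delicate points are the sign normalization and checking that the block partition exists with both the number of blocks and their sizes at most $D$. Blocks of unequal sizes are harmless, since the bounds on $s_0$ and $s_1$ need only upper bounds on the block sizes and the block count. The edge case $k=1$ gives $D=1$, a single block $\operatorname{OR}_1$, and $A(G)=-1$ directly.
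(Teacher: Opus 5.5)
Your proof is correct and follows the paper's argument: the same conjunction of $\operatorname{OR}$ functions on $\lceil k/D\rceil\le D$ blocks of size at most $D$, giving alternating sum $(-1)^m$ and sensitivity at most $D$, and then passed through Lemma~\ref{lem:parity-twist-excess}. Your sign fix $F=1-G$ amounts to the paper's global factor $(-1)^{q+1}$ on $\chi_F$, since $\chi_{1-G}=-\chi_G$. You bound the sensitivity with the conjunction rules rather than the paper's direct count, which is only a cosmetic difference.
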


\begin{proof}
Put $D=\lceil\sqrt{k}\rceil$ and
$q=\lceil k/D\rceil$.  Then $q\le D$.  Partition $[k]$ into nonempty sets
$I_1,\ldots,I_q$, each of size at most $D$, and define
$$
 F(x)=\prod_{j=1}^q
 \left(1-\prod_{i\in I_j}(1-x_i)\right),
 \quad x\in\{0,1\}^k.
$$
Thus $F(x)=1$ exactly when every block $I_j$ contains a coordinate equal to $1$.

If $F(x)=0$, flipping a coordinate can change $F(x)$ only when exactly one block is all zero; that block has at most $D$ coordinates. If $F(x)=1$, a sensitive coordinate must be the unique $1$ in its block, so there are at most $q$ such coordinates. Hence, $s(F)\le \max\{D,q\}=D$.

Since $F$ is the conjunction of the OR functions on the disjoint blocks,
$$
 A(F)=\prod_{j=1}^q A(\operatorname{OR}_{|I_j|})=(-1)^q.
$$
Set $\chi=(-1)^{q+1}\chi_F$. Lemma~\ref{lem:parity-twist-excess} gives
$$
 \sum_x\chi(x)=(-1)^{q+1}(-2A(F))=2,
 \quad d_\chi(x)=s(F,x)\le D.
$$
\end{proof}

The positive color class in Lemma~\ref{lem:cube-coloring} has
$2^{k-1}+1$ vertices, the negative class has $2^{k-1}-1$, and both induced
subgraphs have maximum degree at most $\lceil\sqrt{k}\rceil$.  This is a
convenient two-sided formulation of the construction underlying the sharp
upper bound for the cube.

We shall also use the following theorem of Huang~\cite{Huang2019}.

\begin{theorem}\label{thm:huang}
If $U\subseteq V(Q_k)$ and $|U|\ge 2^{k-1}+1$, then
$$
 \Delta(Q_k[U])\ge\lceil\sqrt{k}\rceil.
$$
\end{theorem}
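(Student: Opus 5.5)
This is Huang's theorem, and the plan is to reproduce his spectral argument: build a signed adjacency matrix of $Q_k$ with a known spectrum, then use eigenvalue interlacing. Define $2^k\times 2^k$ matrices recursively by $A_1=\begin{pmatrix}0&1\\1&0\end{pmatrix}$ and
$$
A_k=\begin{pmatrix}A_{k-1}&I\\ I&-A_{k-1}\end{pmatrix},\quad k\ge2,
$$
indexing rows and columns by $\{0,1\}^k$, with the last coordinate splitting the two blocks. By induction, $A_k$ is symmetric with entries in $\{0,\pm1\}$. Its nonzero entries sit exactly at the pairs $(x,y)$ with $x\sim y$ in $Q_k$, so $|A_k|$ is the adjacency matrix of $Q_k$.

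\textbf{Step 1: spectrum of $A_k$.} A block computation gives $A_k^2=\operatorname{diag}(A_{k-1}^2+I,\,A_{k-1}^2+I)$, since the off-diagonal blocks $A_{k-1}-A_{k-1}$ cancel. Inductively $A_k^2=kI$, so every eigenvalue of $A_k$ is $\pm\sqrt{k}$. Because $\operatorname{tr}A_k=0$, each sign occurs with multiplicity exactly $2^{k-1}$. The ordered eigenvalues are therefore $\lambda_1=\cdots=\lambda_{2^{k-1}}=\sqrt{k}$, followed by $2^{k-1}$ copies of $-\sqrt{k}$.

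\textbf{Step 2: interlacing.} Let $U\subseteq V(Q_k)$ with $|U|=m\ge 2^{k-1}+1$; restricting to a subset of $U$ only lowers $\Delta$, so it is enough to treat this case. Let $B$ be the principal submatrix of $A_k$ on $U$. Cauchy interlacing gives $\lambda_1(B)\ge\lambda_{1+2^k-m}(A_k)$. Since $1+2^k-m\le 2^{k-1}$, this yields $\lambda_1(B)\ge\sqrt{k}$.

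\textbf{Step 3: bounding by the degree.} Take an eigenvector $v$ of $B$ for $\lambda_1(B)$ and choose $u\in U$ maximizing $|v_u|$. Then
$$
\lambda_1(B)|v_u|=\Bigl|\sum_{w\in U}B_{uw}v_w\Bigr|\le \sum_{w\in U,\,w\sim u}|v_w|\le \deg_{Q_k[U]}(u)\,|v_u|.
$$
Hence $\Delta(Q_k[U])\ge\lambda_1(B)\ge\sqrt{k}$, and since the degree is an integer, $\Delta(Q_k[U])\ge\lceil\sqrt{k}\rceil$.

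The only step requiring an idea is Step 1, namely finding a signing of the cube's adjacency matrix whose square is $kI$. The recursive block definition supplies it, and its key feature is the anticommutation that kills the cross terms. The remaining steps are the standard interlacing inequality and the row-sum bound on the largest eigenvalue.
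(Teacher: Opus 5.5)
Your proof is correct: it is Huang's original argument (the signing with $A_k^2=kI$, Cauchy interlacing, then the row-sum bound on $\lambda_1$), and each step checks out. The paper does not reprove this theorem but cites Huang; it uses the same template of anticommuting signed adjacency matrices, interlacing and the absolute-row-sum bound in its proof that $f(P_3^{\square k})=2$.
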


\section{Cartesian products with matched bridges}\label{sec:product}

We first present the lower bound supplied by perfect matchings.

\begin{lemma}\label{lem:cube-cell-bound}
Let $k\ge1$ and $\ell\ge0$. Let $G_1,\ldots,G_k$ be bipartite graphs with perfect matchings,
and let $H_1,\ldots,H_\ell$ be bipartite graphs. Assume that every
factor has at least one vertex.
Put
$$
 G=G_1\square\cdots\square G_k
   \square H_1\square\cdots\square H_\ell,
 \quad N=|V(G)|.
$$
Then $\alpha(G)=N/2$, and
$$
 f_t(G)\ge\lceil\sqrt{k}\rceil
 \quad\text{for }1\le t\le N/2.
$$
\end{lemma}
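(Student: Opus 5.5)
The plan is to establish $\alpha(G)=N/2$ first, and then to obtain the lower bound on $f_t$ by partitioning $V(G)$ into vertex sets that induce subgraphs containing a copy of $Q_k$, and applying Theorem~\ref{thm:huang} to one of these copies.

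\emph{Step 1: the independence number.} A Cartesian product of bipartite graphs is bipartite: colour a vertex $(v_1,\ldots,v_{k+\ell})$ by the parity of the number of coordinates lying in the ``odd'' side of the chosen bipartition of each factor. Every edge changes exactly one coordinate along an edge of that factor, so it flips the parity. Hence each colour class is independent, and one of them has at least $N/2$ vertices, giving $\alpha(G)\ge N/2$. For the reverse inequality, a perfect matching $M_1$ of $G_1$ induces a perfect matching of $G$: the edges $\{(u,w),(u',w)\}$ with $uu'\in M_1$ and $w$ ranging over the vertices of the remaining factors. An independent set contains at most one endpoint of each matching edge, so $\alpha(G)\le N/2$. (In particular $N$ is even.)

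\emph{Step 2: the cube partition.} For each $i\le k$, fix a perfect matching $M_i$ of $G_i$. For every choice of edges $e_1\in M_1,\ldots,e_k\in M_k$ and vertices $h_j\in V(H_j)$, consider the set
\[
C=e_1\times\cdots\times e_k\times\{h_1\}\times\cdots\times\{h_\ell\}.
\]
It has $2^k$ vertices, and the induced subgraph $G[C]$ contains a spanning copy of $Q_k$, because two vertices of $C$ differing in one coordinate $i$ differ along the edge $e_i$. Moreover $G[C]$ is exactly $Q_k$. Two vertices of $C$ that differ in two or more coordinates are not adjacent in a Cartesian product. Two vertices differing in one coordinate are joined exactly when the corresponding factor vertices are adjacent, and the only such pair available in coordinate $i$ is $e_i$ itself. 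These sets $C$ partition $V(G)$, and there are $N/2^k$ of them.

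\emph{Step 3: pigeonhole and Huang.} Let $U\subseteq V(G)$ with $|U|=N/2+t$ and $t\ge1$. If every cell $C$ satisfied $|U\cap C|\le 2^{k-1}$, summing over the cells would give $|U|\le (N/2^k)\cdot 2^{k-1}=N/2$, a contradiction. So some cell has $|U\cap C|\ge 2^{k-1}+1$. By Theorem~\ref{thm:huang}, $\Delta(G[U\cap C])\ge\lceil\sqrt k\rceil$, and since $G[U\cap C]$ is a subgraph of $G[U]$, we get $\Delta(G[U])\ge\lceil\sqrt k\rceil$. The condition $t\le N/2$ only guarantees that such sets $U$ exist, so that $f_t(G)$ is defined.

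The main point needing care is Step 2: one must check that $G[C]$ is exactly $Q_k$ and not merely a supergraph of it. This is not actually necessary, though, because Huang's bound passes to supergraphs on the same vertex set, so even the weaker containment statement suffices.
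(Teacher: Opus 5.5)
Your proposal is correct and follows essentially the same route as the paper: a perfect matching replicated from $G_1$ gives $\alpha(G)\le N/2$, bipartiteness gives the reverse inequality, and the partition of $V(G)$ into induced copies of $Q_k$ (products of matching edges with fixed vertices in the $H_j$) combined with pigeonhole and Huang's theorem gives the bound. Your explicit check that each cell induces exactly $Q_k$ fills in a detail the paper only asserts.
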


\begin{proof}
Replicating a perfect matching of $G_1$ over all choices of the other
coordinates gives a perfect matching of $G$. An independent set contains
at most one endpoint of each matching edge, so $\alpha(G)\le N/2$.
Since $G$ is bipartite, the reverse inequality also holds.

Fix a perfect matching in each $G_i$. Products of their edges, together
with a fixed vertex in each $H_j$, partition $V(G)$ into induced copies
of $Q_k$. Every set $U$ with $|U|=N/2+t>N/2$ contains more than half the
vertices of at least one such copy. Theorem~\ref{thm:huang} therefore gives
$\Delta(G[U])\ge\lceil\sqrt{k}\rceil$.
\end{proof}

A bipartite graph $G$ with a perfect matching is balanced.  Fix a bipartition
character $\rho:V(G)\to\{-1,1\}$, so that $\rho$ changes sign across every edge. The next lemma constructs a binary map that is constant on each side of a matched bridge.

\begin{lemma}\label{lem:bridge-gate}
Let $G$ be a connected bipartite graph with a perfect matching $M$, and let
$uv\in M$ be a bridge. There are functions
$$
 \tau:V(G)\to\{0,1\},\quad
 \eta:V(G)\to\{-1,1\}
$$
with the following properties:
\begin{enumerate}
 \item[$(i)$] $\sum_{\tau(x)=0}\eta(x)=\sum_{\tau(x)=1}\eta(x)=1$;
 \item[$(ii)$] for every edge $xy\in E(G)\setminus\{uv\}$,
 $\tau(x)=\tau(y)$ and $\eta(x)=-\eta(y)$;
 \item[$(iii)$] $\eta(u)=\eta(v)$ and $\tau(u)\ne\tau(v)$.
\end{enumerate}
\end{lemma}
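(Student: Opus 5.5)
Removing the bridge $uv$ splits $G$ into exactly two components; let $C_u$ be the one containing $u$ and $C_v$ the one containing $v$. The plan is to take $\tau$ to be the component indicator and $\eta$ to be the bipartition character $\rho$ with its sign flipped on one side. Concretely, set $\tau(x)=0$ for $x\in V(C_u)$ and $\tau(x)=1$ for $x\in V(C_v)$. Then every edge other than $uv$ lies inside $C_u$ or inside $C_v$, so $\tau(x)=\tau(y)$ for such edges, while $\tau(u)\ne\tau(v)$. This gives the $\tau$-parts of $(ii)$ and $(iii)$.

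For $\eta$, put $\eta=\epsilon_u\rho$ on $C_u$ and $\eta=\epsilon_v\rho$ on $C_v$, with signs $\epsilon_u,\epsilon_v\in\{-1,1\}$ still to be chosen. Since $\rho$ changes sign across every edge, $\eta(x)=-\eta(y)$ holds for every edge inside a component, whatever the signs. Across the bridge, $\rho(u)=-\rho(v)$, so $\eta(u)=\eta(v)$ exactly when $\epsilon_u=-\epsilon_v$. It remains to choose the signs so that the sums in $(i)$ both equal $1$, and this is the step I expect to need the real argument.

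The key point is that the perfect matching $M$ restricts well to the components. Since $uv\in M$ is the only edge between $C_u$ and $C_v$, every other edge of $M$ lies inside one component. Hence $M\setminus\{uv\}$ perfectly matches $V(C_u)\setminus\{u\}$, so in $C_u$ the two colour classes of $\rho$ differ in size by exactly one, with $u$'s class the larger. This gives $\sum_{x\in C_u}\rho(x)=\rho(u)$ and, likewise, $\sum_{x\in C_v}\rho(x)=\rho(v)=-\rho(u)$.

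Now choose $\epsilon_u=\rho(u)$ and $\epsilon_v=\rho(v)$. Then
$$
\sum_{\tau(x)=0}\eta(x)=\rho(u)^2=1
\quad\text{and}\quad
\sum_{\tau(x)=1}\eta(x)=\rho(v)^2=1,
$$
so $(i)$ holds. Moreover $\epsilon_u=\rho(u)=-\rho(v)=-\epsilon_v$, which is precisely the compatibility condition needed above for $\eta(u)=\eta(v)$, completing $(iii)$.

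Connectivity of $G$ is used only to guarantee exactly two sides. Even without it, one could assign the components of $G-uv$ not containing $u$ or $v$ to either $\tau$-class, since each is balanced and contributes zero to the sums.
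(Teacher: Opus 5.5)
Your proposal is correct and is essentially the paper's proof. The paper also takes $\tau$ to be the component indicator of $G-uv$ and uses $M-uv$ to get $\sum_{C_u}\rho=\rho(u)$ and $\sum_{C_v}\rho=\rho(v)$. It then sets $\eta=(-1)^{\tau}\rho$ after normalizing $\rho(u)=1$, which is exactly your choice $\epsilon_u=\rho(u)$, $\epsilon_v=\rho(v)$.
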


\begin{proof}
Let $L$ and $R$ be the components of $G-uv$ containing $u$ and $v$, respectively.  Reverse $\rho$ if necessary so that $\rho(u)=1$ and $\rho(v)=-1$.  The matching $M-uv$ pairs all vertices of $L$ except $u$ and all vertices of $R$ except $v$. Since the endpoints of each matching edge have opposite $\rho$-values,
$$
 \sum_{x\in L}\rho(x)=1,
 \quad
 \sum_{x\in R}\rho(x)=-1.
$$
Set $\tau=0$ on $L$, $\tau=1$ on $R$, and
$\eta(x)=(-1)^{\tau(x)}\rho(x)$.
Then $\sum_{x\in L}\eta(x)=\sum_{x\in R}\eta(x)=1$ and $\eta(u)=\eta(v)=1$.  Inside either component, $\tau$ is
constant and $\eta$ changes across every edge, whereas both $\rho$ and $\tau$ change across $uv$.  This proves all three assertions.
\end{proof}

The coordinate-sum lift of Asensio et al.~\cite[Lemma~2.3]{AFGK2026}
is designed for Cartesian powers. Here the matched-bridge construction
allows a separate binary map in each factor, including nonisomorphic
factors, with at most one incident edge in each factor contributing to
the monochromatic degree.

We can now state the general product theorem.  A bipartite graph will be
called \emph{unbalanced} if it has a bipartition whose two classes have
different sizes.

\begin{theorem}\label{thm:general-product}
Let $G_1,\ldots,G_k$ be connected bipartite graphs, each having a perfect matching that contains a bridge.  Let $H_1,\ldots,H_\ell$ be unbalanced
bipartite graphs, where $\ell\ge 0$, and put
$$
 G=G_1\square\cdots\square G_k
   \square H_1\square\cdots\square H_\ell.
$$
If $k\ge 1$, then
$f(G)=\lceil\sqrt{k}\rceil$.
\end{theorem}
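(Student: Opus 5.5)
The lower bound comes directly from Lemma~\ref{lem:cube-cell-bound}; the upper bound comes from lifting the coloring of Lemma~\ref{lem:cube-coloring} to $G$ through the bridge gates of Lemma~\ref{lem:bridge-gate}. Put $D=\lceil\sqrt{k}\rceil$ and $N=|V(G)|$.

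\emph{Lower bound.} Each $G_i$ is bipartite with a perfect matching, and each $H_j$ is bipartite. So Lemma~\ref{lem:cube-cell-bound} applies and gives $\alpha(G)=N/2$ and $f(G)=f_1(G)\ge D$.

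\emph{Upper bound: the coloring.} For each $i\le k$, let $u_iv_i$ be a bridge lying in a perfect matching of $G_i$. Lemma~\ref{lem:bridge-gate} then supplies maps $\tau_i:V(G_i)\to\{0,1\}$ and $\eta_i:V(G_i)\to\{-1,1\}$. For each $j\le \ell$, let $\rho_j$ be a bipartition character of $H_j$. Write $s_j=\sum_{y\in V(H_j)}\rho_j(y)$; this is nonzero because $H_j$ is unbalanced, and we set $\sigma=\operatorname{sign}\prod_j s_j$ (with $\sigma=1$ if $\ell=0$). Let $\chi$ be the coloring of $Q_k$ from Lemma~\ref{lem:cube-coloring}. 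For a vertex $w=(x_1,\ldots,x_k,y_1,\ldots,y_\ell)$ of $G$, define
$$
 c(w)=\sigma\,\chi\bigl(\tau_1(x_1),\ldots,\tau_k(x_k)\bigr)\prod_{i=1}^k\eta_i(x_i)\prod_{j=1}^\ell\rho_j(y_j).
$$

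\emph{Size of the positive class.} Grouping the vertices by the vector $z=(\tau_i(x_i))_i\in\{0,1\}^k$ and using Lemma~\ref{lem:bridge-gate}$(i)$ for each factor, we get
$$
 \sum_w c(w)=\sigma\sum_z\chi(z)\prod_i 1\prod_j s_j=2\Bigl|\prod_j s_j\Bigr|\ge 2.
$$
Hence the class $c^{-1}(1)$ has at least $N/2+1$ vertices, and any subset of it of size exactly $N/2+1$ will serve as $U$. Passing to a subset can only lower degrees, so it suffices to bound the maximum degree inside $c^{-1}(1)$.

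\emph{Degree bound.} This is the step needing care. Consider an edge of $G$, which changes a single coordinate.
\begin{itemize}
 \item If it changes $x_i$ along an edge of $G_i$ other than $u_iv_i$, then by Lemma~\ref{lem:bridge-gate}$(ii)$ the value $\tau_i$ is unchanged and $\eta_i$ flips, so $c$ flips.
 \item If it changes $y_j$, then $\rho_j$ flips, so again $c$ flips.
 \item If it changes $x_i$ along the bridge $u_iv_i$, then by Lemma~\ref{lem:bridge-gate}$(iii)$ the value $\eta_i$ is unchanged and $z$ moves to $z^{(i)}$. The edge is then monochromatic exactly when $\chi(z)=\chi(z^{(i)})$.
\end{itemize}
A vertex $x_i$ of $G_i$ is incident to at most one copy of the bridge $u_iv_i$, namely only when $x_i\in\{u_i,v_i\}$. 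Therefore the monochromatic degree of $w$ is at most the number of coordinates $i$ with $\chi(z)=\chi(z^{(i)})$. That number is $d_\chi(z)\le D$ by Lemma~\ref{lem:cube-coloring}.

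Thus $\Delta(G[U])\le D$, which gives $f(G)\le D$. Together with the lower bound, $f(G)=\lceil\sqrt{k}\rceil$.
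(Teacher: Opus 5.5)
Your proof is correct and is essentially the paper's argument. It uses the same lift $\chi(\tau_1(x_1),\ldots,\tau_k(x_k))\prod_i\eta_i(x_i)\prod_j\rho_j(y_j)$, and your global sign $\sigma$ simply plays the role of the paper's choice of each bipartition character $\zeta_j$ with positive sum. One small precision: when $H_j$ is disconnected, an arbitrary bipartition character can have sum zero (e.g.\ two copies of $P_3$ oriented oppositely), so $\rho_j$ should be taken from a bipartition witnessing that $H_j$ is unbalanced.
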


\begin{proof}
Let $N=|V(G)|$. By Lemma~\ref{lem:cube-cell-bound},
$\alpha(G)=N/2$ and $f(G)\ge\lceil\sqrt{k}\rceil$.

For the upper bound, apply Lemma~\ref{lem:bridge-gate} to each $G_i$, and
write the resulting functions as $\tau_i$ and $\eta_i$.  For every $H_j$,
choose a bipartition character $\zeta_j$ with positive sum; this is possible
because $H_j$ is unbalanced.  Take the cube coloring $\chi$ from
Lemma~\ref{lem:cube-coloring} and for each $v=(x_1,\ldots,x_k,y_1,\ldots,y_\ell)\in V(G)$ define
$$
 \Psi(v)=\Psi(x_1,\ldots,x_k,y_1,\ldots,y_\ell)
  = \chi(\tau_1(x_1),\ldots,\tau_k(x_k))
    \cdot\prod_{i=1}^k\eta_i(x_i)
    \prod_{j=1}^\ell\zeta_j(y_j).
$$
For $a_i\in\{0,1\}$, let 
$\tau_i^{-1}(a_i)=\{x_i\in V(G_i):\tau_i(x_i)=a_i\}$ be the fiber of $\tau_i$ over $a_i$. By Lemma~\ref{lem:bridge-gate},
$\sum_{x_i\in\tau_i^{-1}(a_i)}\eta_i(x_i)=1$. 
Grouping the sum according
to $a=(\tau_1(x_1),\ldots,\tau_k(x_k))$ therefore gives
\begin{align*}
 \sum_{v\in V(G)}\Psi(v)
 &=\left(\sum_{a\in\{0,1\}^k}\chi(a)
     \prod_{i=1}^k\sum_{x_i\in\tau_i^{-1}(a_i)}\eta_i(x_i)\right)
   \prod_{j=1}^{\ell}\sum_{y\in V(H_j)}\zeta_j(y)\\
 &=2\prod_{j=1}^{\ell}\sum_{y\in V(H_j)}\zeta_j(y)\ge2.
\end{align*}
Here each $\zeta_j$ has positive integral sum, and an empty product is one.
Thus the positive color class
$$
 U_+=\{v\in V(G):\Psi(v)=1\}
$$
has at least $N/2+1=\alpha(G)+1$ vertices.

It remains to bound its induced degree.  An edge in an $H_j$-direction
changes $\zeta_j$ and hence changes the sign of $\Psi$.  An edge in a $G_i$-direction other than the selected bridge
leaves $\tau_i$ unchanged and changes $\eta_i$, so it also changes the sign
of $\Psi$.  Across the selected bridge of $G_i$, the
$\eta_i$-value stays fixed while the $i$-th input of $\chi$ changes.  Such an
edge is monochromatic precisely when the corresponding cube edge is
monochromatic under $\chi$.  At a fixed product vertex these edges map to
distinct cube directions.  Lemma~\ref{lem:cube-coloring} therefore gives
$$
 \Delta(G[U_+])\le\lceil\sqrt{k}\rceil.
$$
Delete vertices from $U_+$ if necessary until exactly $N/2+1$ remain.
Deletion does not increase maximum degree, so the lower and upper bounds
agree.
\end{proof}

\begin{proof}[Proof of Theorem~\ref{thm:trees-intro}]
Every edge of a tree is a bridge.  Choose any edge from a perfect matching
of each $T_i$ and apply Theorem~\ref{thm:general-product} with no additional unbalanced factors.
\end{proof}

\section{Cartesian products of paths}\label{sec:paths}

We first record the independence number of an arbitrary Cartesian product of paths.

\begin{lemma}\label{lem:alpha-box}
If $n_1,\ldots,n_k\ge 2$ and $N=\prod_{i=1}^k n_i$, then
$$
 \alpha(P_{n_1}\square\cdots\square P_{n_k})=\left\lceil\frac{N}{2}\right\rceil.
$$
\end{lemma}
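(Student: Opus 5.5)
The plan is to prove the matching lower and upper bounds $\lceil N/2\rceil\le\alpha\le\lceil N/2\rceil$ for $G=P_{n_1}\square\cdots\square P_{n_k}$.

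\emph{Lower bound.} Label the vertices of $P_{n_i}$ by $1,\ldots,n_i$ and give a vertex $(x_1,\ldots,x_k)$ the parity of $x_1+\cdots+x_k$. Every edge of $G$ changes exactly one coordinate by $\pm1$, so each parity class is independent; this is the usual bipartition of $G$. The larger class has at least $\lceil N/2\rceil$ vertices, so $\alpha(G)\ge\lceil N/2\rceil$.

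\emph{Upper bound, even case.} If some $n_i$ is even, then $P_{n_i}$ has a perfect matching. Lemma~\ref{lem:cube-cell-bound}, applied with $G_1=P_{n_i}$ and the remaining paths as the bipartite factors $H_j$, gives $\alpha(G)=N/2$ directly.

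\emph{Upper bound, all odd.} Here $N$ is odd, so it suffices to find a matching of $G$ covering all but one vertex. Then an independent set contains at most one endpoint of each matching edge plus possibly the single uncovered vertex, giving $\alpha(G)\le (N-1)/2+1=\lceil N/2\rceil$. The matching is built by induction on $k$.
\begin{itemize}
\item For $k=1$, the path $P_{n_1}$ with $n_1$ odd has a matching missing only its last vertex.
\item For the inductive step, write $G=G'\square P_{n_k}$ and take a near-perfect matching $M'$ of $G'$ missing the single vertex $w$.
\item Replicate $M'$ in every layer $G'\times\{j\}$; this covers all vertices except the fiber $\{w\}\times V(P_{n_k})$.
\item That fiber induces a copy of $P_{n_k}$, which has a matching missing one vertex.
\end{itemize}
The union of these matchings misses exactly one vertex of $G$.

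This matching argument is also available in the even case, where the layered fiber path has a perfect matching, so the whole upper bound can be handled uniformly by this induction. Every step is routine; the only point needing care is that the near-perfect matching leaves exactly one vertex uncovered, which the fiber construction guarantees.
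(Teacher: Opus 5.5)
Your proof is correct and follows the paper's route: the parity bipartition gives $\alpha(G)\ge\lceil N/2\rceil$, and a recursively built matching missing at most one vertex gives the reverse bound. The only difference is cosmetic. The paper matches along the $P_{n_k}$-fibers and recurses into the leftover layer $x_k=n_k$, whereas you replicate the matching of $G'$ across layers and then match the leftover fiber over $w$; your appeal to Lemma~\ref{lem:cube-cell-bound} in the even case is valid but unnecessary.
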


\begin{proof}
Let $G=P_{n_1}\square\cdots\square P_{n_k}$.
Color a vertex $(x_1,\ldots,x_k)$ by the parity of $x_1+\cdots+x_k$. The larger color class gives $\alpha(G)\ge\lceil N/2\rceil$.

To prove the reverse inequality, we construct a matching that
covers all but at most one vertex. For every fixed choice of
$x_1,\ldots,x_{k-1}$, pair the vertices in the last coordinate
using the edges $(1,2),(3,4),(5,6),\ldots$.
If $n_k$ is even, this matches every vertex of $G$. If $n_k$
is odd, the unmatched vertices are precisely those with
$x_k=n_k$, and they induce a copy of
$P_{n_1}\square\cdots\square P_{n_{k-1}}$.
We then apply the same procedure to this remaining
$(k-1)$-dimensional product. Repeating recursively, we obtain a
matching $M$ that leaves at most one vertex unmatched. Thus
$|M|=\left\lfloor\frac{N}{2}\right\rfloor$.

An independent set contains at most one endpoint of each edge of
$M$, together with the possible unmatched vertex. Therefore,
$$
\alpha(G) \leq |M|+N-2|M|
=N-|M|
=\left\lceil\frac{N}{2}\right\rceil.
$$
The result follows.
\end{proof}

The case containing at least one even factor is immediate from the general
product theorem.

\begin{proposition}\label{prop:mixed-box}
Let $e\geq 1$ denote the number of even integers among
$n_1,\ldots,n_k$. Then
$$
 f(P_{n_1}\square\cdots\square P_{n_k})
 =\lceil\sqrt e\rceil.
$$
\end{proposition}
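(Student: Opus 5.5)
The plan is to apply Theorem~\ref{thm:general-product} directly. Reorder the factors so that the $e$ even paths come first: $P_{n_1},\ldots,P_{n_e}$ with every $n_i$ even, followed by the odd paths $P_{m_1},\ldots,P_{m_\ell}$, where $\ell=k-e$. Reordering is harmless because the Cartesian product is commutative up to isomorphism, and $f$ is an isomorphism invariant. We then take the even paths as the factors $G_1,\ldots,G_e$ and the odd paths as the factors $H_1,\ldots,H_\ell$.

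Next we verify the hypotheses. An even path $P_{2r}$ is connected and bipartite. It has the perfect matching $\{(1,2),(3,4),\ldots,(2r-1,2r)\}$. Every edge of a path is a bridge, so this perfect matching contains a bridge, for instance the edge $(1,2)$. An odd path $P_{2s+1}$ with $2s+1\ge 3$ is bipartite with classes of sizes $s+1$ and $s$. Hence it is unbalanced in the sense defined before Theorem~\ref{thm:general-product}. Since $n_i\ge 2$, every odd $n_i$ is at least $3$, so no degenerate single-vertex case arises. Finally, $e\ge1$ supplies the condition $k\ge1$ of the theorem, with $e$ playing the role of $k$ there.

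Theorem~\ref{thm:general-product} then gives $f(G)=\lceil\sqrt e\rceil$ immediately. Its internal use of $\alpha(G)=N/2$ is consistent with Lemma~\ref{lem:alpha-box}, because $N$ is even when some $n_i$ is even. I expect no real obstacle here. The only point deserving care is confirming that the odd paths meet the paper's definition of unbalanced, which they do since their two bipartition classes differ in size by exactly one. That difference is what makes $\sum\zeta_j>0$ in the construction of the theorem.
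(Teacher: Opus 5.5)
Your proposal is correct and is essentially the paper's proof: the paper also applies Theorem~\ref{thm:general-product} with the $e$ even paths as the matched-bridge factors $G_i$ and the odd paths as the unbalanced factors $H_j$. Your extra checks (reordering the factors, every odd $n_i\ge3$, and consistency with Lemma~\ref{lem:alpha-box}) are valid details that the paper leaves implicit.
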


\begin{proof}
Every even path has a perfect matching, every edge of a path is a bridge, and every odd path is unbalanced.  Apply
Theorem~\ref{thm:general-product}, taking the $e$ even paths as the $G_i$ and the odd paths as the $H_j$.
\end{proof}

For completeness, we give short proofs for the all-odd cases.  The following
sign-word construction applies whenever one factor has order at least
$5$.

\begin{proposition}\label{prop:odd-large}
Suppose that every $n_i$ is odd and at least one $n_i$ is at least $5$.
Then
$$
 f(P_{n_1}\square\cdots\square P_{n_k})=1.
$$
\end{proposition}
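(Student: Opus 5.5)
The lower bound is immediate. By Lemma~\ref{lem:alpha-box}, $\alpha(G)=\lceil N/2\rceil=(N+1)/2$, where $G=P_{n_1}\square\cdots\square P_{n_k}$ and $N=\prod n_i$ is odd. Any set of $\alpha(G)+1$ vertices is not independent, so it induces at least one edge, and $f(G)\ge 1$. The work is therefore in the upper bound. I will build a sign function $\Psi:V(G)\to\{-1,1\}$ with $\sum_v\Psi(v)=3$ such that every vertex has at most one neighbour of the same sign. Its positive class then has $(N+3)/2=\alpha(G)+1$ vertices and induces maximum degree at most $1$.

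Reorder the factors so that $n_1=2r+1\ge5$. On $P_{n_1}$, with vertices $1,\dots,n_1$ in path order, I define the \emph{sign word}
$$
 \sigma=(+,+,-,+,+,-,+,-,+,\ldots),
$$
that is, $\sigma(1)=\sigma(2)=\sigma(4)=\sigma(5)=+1$, $\sigma(3)=-1$, and for $i\ge5$ the signs alternate, $\sigma(i+1)=-\sigma(i)$. The only edges of $P_{n_1}$ with equal signs at both ends are $\{1,2\}$ and $\{4,5\}$, and these two edges are disjoint. Hence every vertex has at most one neighbour with the same $\sigma$-sign. For the sum, positions $1$ to $5$ contribute $3$. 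The remaining $n_1-5$ positions form an even-length alternating block starting with $-$, which contributes $0$. Thus $\sum\sigma=3$.

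For each odd factor $P_{n_j}$ with $j\ge2$, I take the bipartition character $\zeta_j$ with positive sum. Since the path is odd, this is $\zeta_j(i)=(-1)^{i+1}$, and its sum is $1$. Then I set
$$
\Psi(x_1,\ldots,x_k)=\sigma(x_1)\prod_{j\ge2}\zeta_j(x_j).
$$
The sum factorizes, giving $\sum_v\Psi(v)=3\cdot1^{k-1}=3$. An edge in a direction $j\ge2$ flips $\zeta_j$ and leaves every other factor unchanged, so it is never monochromatic. An edge in direction $1$ is monochromatic exactly when the corresponding edge of $P_{n_1}$ has equal $\sigma$-signs. So each product vertex has at most one monochromatic neighbour, and the positive class $U_+$ satisfies $\Delta(G[U_+])\le1$ with $|U_+|=(N+3)/2$.

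This gives $f(G)\le1$, which together with the lower bound proves the statement. The only delicate point is designing $\sigma$: it must have sum $3$ while its monochromatic edges form a matching. This is exactly where $n_1\ge5$ is needed, since $P_3$ admits no such word. The hypothesis is therefore used precisely in the choice of $\sigma$, and I would check the sum and the matching property explicitly in that step.
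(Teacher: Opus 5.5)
Your proposal is correct and matches the paper's proof: the paper uses the same sign word $++-++(-+)^{t-2}$ on the factor of order at least $5$, alternating bipartition characters on the other odd paths, and the sum of $3$ giving $(N+3)/2=\alpha(G)+1$ vertices by Lemma~\ref{lem:alpha-box}. The lower bound also comes, as in the paper, from the fact that $\alpha(G)+1$ vertices cannot be independent.
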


\begin{proof}
Put $G=P_{n_1}\square\cdots\square P_{n_k}$ and $N=|V(G)|$.
Write $n_1=2t+1\ge5$ after reordering the factors. We assign signs to the vertices of the first path $P_{n_1}$ in their order along the path, namely
\begin{equation}
 ++-++(-+)^{t-2},
 \label{eq:sign-word}
\end{equation}
where $(-+)^{t-2}$ means $t-2$ repetitions of the pair $-+$.
Its sign sum is $3$, and each monochromatic component has at most two
vertices.  On every other odd path use its alternating bipartition character,
whose sign sum has absolute value $1$.  Color the product by the product of
these coordinate signs.  The total sign sum has absolute value $3$, so the
larger color class has
$$
 (N+3)/2=\alpha(G)+1
$$
vertices by Lemma~\ref{lem:alpha-box}.  Monochromatic edges can occur only in
the first coordinate, and the sign word \eqref{eq:sign-word} gives induced
maximum degree at most $1$.  Since a set of $\alpha(G)+1$ vertices cannot be
independent, equality follows.
\end{proof}

It remains to treat products of $P_3$.  Zeng and Hou already determined this case~\cite{ZeHo2024}; we include a short proof using anti-commuting signed adjacency matrices, so that the classification is self-contained.

\begin{proposition}\label{prop:p3}
For every $k\ge 1$, $f(P_3^{\square k})=2$.
\end{proposition}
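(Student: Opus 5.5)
The plan is a sign-product construction for the upper bound and an anti-commuting signed adjacency matrix with Cauchy interlacing for the lower bound. By Lemma~\ref{lem:alpha-box}, with $N=3^k$ we have $\alpha(P_3^{\square k})=(N+1)/2$, so the target set size is $m=(N+3)/2$.

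\textbf{Upper bound.} I would reuse the sign-word idea from Proposition~\ref{prop:odd-large}.
\begin{enumerate}
 \item On the first factor $P_3$, assign the constant word $+++$. Its sign sum is $3$, and its single monochromatic component is the whole path, so each vertex has monochromatic degree at most $2$.
 \item On every other factor, use the alternating bipartition character $+-+$, whose sign sum is $1$.
 \item Color each vertex of $P_3^{\square k}$ by the product of its coordinate signs.
\end{enumerate}
An edge in direction $i\ge2$ flips the $i$-th sign, so it is bichromatic. Hence monochromatic edges occur only in direction $1$, and each vertex has monochromatic degree at most $2$. The total sign sum is $3$, so the positive class has $(N+3)/2=\alpha+1$ vertices and induces maximum degree at most $2$. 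Thus $f(P_3^{\square k})\le 2$. (For $k=1$ this set is all of $P_3$.)

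\textbf{Lower bound.} I would build the signed adjacency matrix recursively.
\begin{enumerate}
 \item Let $A$ be the adjacency matrix of $P_3$ and $S=\mathrm{diag}(1,-1,1)$. Since $P_3$ is bipartite, $SA=-AS$, and also $S^2=I$.
 \item Set $B_1=A$ and $B_k=A\otimes I+S\otimes B_{k-1}$. This is a symmetric signed adjacency matrix of $P_3^{\square k}$: its support is exactly the edge set and every entry is $\pm1$.
 \item Squaring, the cross terms vanish because $(A\otimes I)(S\otimes B_{k-1})+(S\otimes B_{k-1})(A\otimes I)=(AS+SA)\otimes B_{k-1}=0$. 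Hence $B_k^2=A^2\otimes I+I\otimes B_{k-1}^2$.
 \item By induction, $B_k^2$ is a Kronecker sum of $k$ copies of $A^2$. Since $A^2$ has eigenvalues $2,2,0$, the eigenvalues of $B_k^2$ are $2j$ with multiplicity $\binom{k}{j}2^j$. In particular $0$ is an eigenvalue of $B_k$ with multiplicity exactly $1$.
 \item The total bipartition signature $\Sigma$ satisfies $\Sigma B_k\Sigma=-B_k$, so the spectrum of $B_k$ is symmetric about $0$. Therefore $B_k$ has exactly $(N-1)/2$ eigenvalues that are at least $\sqrt2$.
\end{enumerate}

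\textbf{Interlacing.} Let $U$ have $|U|=m=(N+3)/2$, and let $B_U$ be the principal submatrix of $B_k$ on $U$. Cauchy interlacing gives
$$\lambda_1(B_U)\ge\lambda_{N-m+1}(B_k)=\lambda_{(N-1)/2}(B_k)\ge\sqrt2 .$$
Any signed adjacency matrix of a graph has spectral radius at most the maximum degree, so $\Delta(G[U])\ge\lambda_1(B_U)\ge\sqrt2>1$. Hence $\Delta(G[U])\ge 2$, which gives $f\ge2$.

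The main point to check is the index bookkeeping in the interlacing step. It relies on the zero eigenvalue being simple, which is what makes the $(N-1)/2$-th largest eigenvalue strictly positive and hence at least $\sqrt2$.
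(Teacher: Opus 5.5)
Your proof is correct and is essentially the paper's argument. The upper bound uses the same coloring: $+++$ on one factor and alternating words on the others. The lower bound uses the same anticommuting signed adjacency matrix, the same spectral symmetry and Cauchy interlacing; your recursion $B_k=A\otimes I+S\otimes B_{k-1}$ unfolds to exactly the paper's $M_k=\sum_{i=1}^k S^{\otimes(i-1)}\otimes A\otimes I_3^{\otimes(k-i)}$, and your index computation $N-m+1=(N-1)/2$ matches the paper's interlacing step.
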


\begin{proof}
Let $A$ be the adjacency matrix of $P_3$ and put
$S=\operatorname{diag}(1,-1,1)$.  Then $SA=-AS$.  Define the signed adjacency matrix of $P_3^{\square k}$ by 
$$
 M_k=\sum_{i=1}^k
 S^{\otimes(i-1)}\otimes A\otimes I_3^{\otimes(k-i)}.
 $$
The summands anticommute pairwise, so all cross terms vanish
when $M_k$ is squared. Thus, 
$$
 M_k^2=\sum_{i=1}^k
 I_3^{\otimes(i-1)}\otimes A^2\otimes I_3^{\otimes(k-i)}.
$$
The eigenvalues of $A^2$ are $0,2,2$.  It follows that $M_k$ has a
one-dimensional kernel and that every nonzero eigenvalue has absolute value
at least $\sqrt 2$.  In addition, $M_k$ anticommutes with $S^{\otimes k}$. If $\lambda$ is an eigenvalue of $M_k$ with eigenvector $\xi$, then $-\lambda$ is also an eigenvalue of $M_k$ with eigenvector $S^{\otimes k}\xi$. 
All nonzero eigenvalues of $M_k$ therefore occur in opposite pairs, and it has exactly
$(3^k-1)/2$ positive eigenvalues, all at least $\sqrt 2$.

Let $U\subseteq V(P_3^{\square k})$ have
$$
 |U|=\alpha(P_3^{\square k})+1=(3^k+3)/2.
$$
Ordering the eigenvalues of $M_k$ decreasingly, Cauchy interlacing yields
$$
 \lambda_{\max}(M_k[U])
 \ge\lambda_{(3^k-1)/2}(M_k)\ge\sqrt2.
$$
Since $M_k[U]$ is a signed adjacency matrix supported on
$P_3^{\square k}[U]$, its largest eigenvalue is at most its spectral radius, which is at most the maximum absolute row sum.  
The maximum absolute row sum of $M_k[U]$ equals $\Delta(P_3^{\square k}[U])$. Hence
$\Delta(P_3^{\square k}[U])\ge2$.

For the reverse inequality, give one $P_3$ factor the constant sign word
$+++$ and every other factor an alternating sign word.  The product coloring
has total sign sum of absolute value $3$, so its larger class has
$(3^k+3)/2$ vertices.  Its monochromatic edges occur only in the first
coordinate and have maximum degree $2$.  Hence $f(P_3^{\square k})\le 2$.
\end{proof}

Theorem~\ref{thm:all-path-intro} now follows from
Propositions~\ref{prop:mixed-box}, \ref{prop:odd-large}, and~\ref{prop:p3}.
Theorem~\ref{thm:even-paths-intro} is the special case of
Proposition~\ref{prop:mixed-box} in which all factors are even.

\section{The t-sensitivity of Cartesian products}\label{sec:fixed-excess}

We now turn to the parameter $f_t$ from \eqref{eq:def-ft}.
Lemma~\ref{lem:cube-cell-bound} supplies the lower bound
$\lceil\sqrt{k}\rceil$ for every admissible positive excess in a
product of $k$ bipartite graphs with perfect matchings. For the upper bound, we construct colorings with imbalance at least $2t$ and monochromatic degree at most $D$, then retain exactly $\alpha(G)+t$ vertices from the larger color class.

We begin with a flexible extension of the matched-bridge lift.  If
$\rho:V(B)\to\{-1,1\}$ is a bipartition character and
$\tau:V(B)\to\{0,1\}$, call the edges across the two $\tau$-fibers the
\emph{$\tau$-cut}.

\begin{theorem}\label{thm:matching-cut-ft}
Let $k\ge1$, and let $B_1,\ldots,B_k$ be connected bipartite
graphs with perfect matchings.  For each $i$, fix a bipartition character
$\rho_i:V(B_i)\to\{-1,1\}$ and a map
$\tau_i:V(B_i)\to\{0,1\}$ whose $\tau_i$-cut is a matching.  Put
$$
 \gamma_i=\left|\sum_{\tau_i(x)=0}\rho_i(x)\right|,
 \quad \Gamma=\prod_{i=1}^k\gamma_i.
$$
If every $\gamma_i$ is positive, then, for $1\le t\le\Gamma$,
$$
 f_t(B_1\square\cdots\square B_k)=\lceil\sqrt{k}\rceil.
$$
\end{theorem}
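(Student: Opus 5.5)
The lower bound comes directly from Lemma~\ref{lem:cube-cell-bound}: each $B_i$ is bipartite with a perfect matching, so $\alpha(G)=N/2$ and $f_t(G)\ge\lceil\sqrt{k}\rceil$ for $1\le t\le N/2$. Here $G=B_1\square\cdots\square B_k$ and $N=|V(G)|$. We also need $\Gamma\le N/2$, so that $t$ is admissible. Since $\gamma_i\le|V(B_i)|/2$ and every $|V(B_i)|\ge2$, we get $\Gamma\le\prod_i|V(B_i)|/2^k\le N/2$. The real work is the upper bound. We will build a coloring $\Psi$ of $V(G)$ whose imbalance is at least $2\Gamma$ and whose monochromatic degree is at most $D=\lceil\sqrt{k}\rceil$. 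Keeping exactly $N/2+t$ vertices of the larger class then gives $f_t(G)\le D$.

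The plan is to copy the proof of Theorem~\ref{thm:general-product}, with $\tau_i$ now arbitrary maps whose cut is a matching. Put $s_i=\sum_{\tau_i(x)=0}\rho_i(x)$, so $|s_i|=\gamma_i$. Because $B_i$ is balanced, $\sum_x\rho_i(x)=0$, and hence $\sum_{\tau_i(x)=1}\rho_i(x)=-s_i$. Define
\[
\eta_i(x)=\epsilon_i(-1)^{\tau_i(x)}\rho_i(x),\qquad \epsilon_i=\operatorname{sign}(s_i).
\]
With this choice each $\tau_i$-fiber has $\eta_i$-sum exactly $\gamma_i$. Across an edge inside a fiber, $\eta_i$ changes sign. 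Across a cut edge, $\eta_i$ keeps its sign and $\tau_i$ changes. Now set
\[
\Psi(x_1,\ldots,x_k)=\chi(\tau_1(x_1),\ldots,\tau_k(x_k))\prod_i\eta_i(x_i),
\]
where $\chi$ is the coloring from Lemma~\ref{lem:cube-coloring}. Grouping the sum by fibers as before gives
\[
\sum_v\Psi(v)=\sum_a\chi(a)\prod_i\gamma_i=2\Gamma.
\]
So the positive class has $N/2+\Gamma\ge N/2+t$ vertices.

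Next we bound the monochromatic degree. An edge in direction $i$ that does not lie in the $\tau_i$-cut flips $\eta_i$ and leaves $\chi$'s input unchanged, so it is bichromatic. An edge in the $\tau_i$-cut keeps $\eta_i$ and flips coordinate $i$ of the cube input. Such an edge is monochromatic exactly when the corresponding cube edge is $\chi$-monochromatic.

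The main point is this: because the $\tau_i$-cut is a matching, each vertex $x_i$ of $B_i$ has at most one cut edge. So at a product vertex $v$, the monochromatic edges inject into distinct cube directions at $a=(\tau_i(x_i))_i$. Their number is therefore at most $d_\chi(a)\le D$, by Lemma~\ref{lem:cube-coloring}. This injectivity is the only place the matching hypothesis enters, and it replaces the single-bridge argument used earlier. Hence $\Delta(G[U_+])\le D$. Deleting vertices down to $N/2+t$ finishes the proof.

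Connectedness of the $B_i$ is not actually needed here beyond ensuring the earlier lemma applies.
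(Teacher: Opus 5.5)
Your proposal is correct and follows the paper's proof essentially step for step. Both arguments take the lower bound from Lemma~\ref{lem:cube-cell-bound}, check $\Gamma\le N/2$, and lift the coloring of Lemma~\ref{lem:cube-coloring} via $\eta_i=(-1)^{\tau_i}\rho_i$ to get imbalance $2\Gamma$, using the matching property of the $\tau_i$-cut to inject monochromatic edges into distinct cube directions. Your sign $\epsilon_i$ plays the role of the paper's reversal of $\rho_i$.
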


\begin{proof}
Set $G=B_1\square\cdots\square B_k$, $N=|V(G)|$, and
$D=\lceil\sqrt{k}\rceil$.  The product is bipartite and has a perfect
matching, so $\alpha(G)=N/2$.  Lemma~\ref{lem:cube-cell-bound} gives
the required lower bound $f_t(G)\ge D$.

If $|V(B_i)|=2n_i$, then $\gamma_i\le n_i$, which implies 
$\Gamma\le\prod_i n_i\le N/2$. Hence every value of $t$ in the statement is in the domain of $f_t$.

After reversing $\rho_i$ when necessary, assume that
$\sum_{\tau_i=0}\rho_i=\gamma_i$.  Balance gives
$\sum_{\tau_i=1}\rho_i=-\gamma_i$.  Define
$$
 \eta_i(x)=(-1)^{\tau_i(x)}\rho_i(x).
$$
Thus the $\eta_i$-sum on either $\tau_i$-fiber is $\gamma_i$.  Let
$\chi:\{0,1\}^k\to\{-1,1\}$ be the coloring from
Lemma~\ref{lem:cube-coloring}.  On $G$ put
$$
 \Psi(x_1,\ldots,x_k)
 =\chi(\tau_1(x_1),\ldots,\tau_k(x_k))
   \prod_{i=1}^k\eta_i(x_i).
$$
The sum factorizes and equals
$$
 \sum_{x\in V(G)}\Psi(x)
 =\Gamma\sum_{a\in\{0,1\}^k}\chi(a)=2\Gamma.
$$
Consequently the positive class has $N/2+\Gamma$ vertices.

Consider an edge in the $i$-th factor direction.  Off the $\tau_i$-cut, $\tau_i$ is unchanged and $\eta_i$ changes sign, so $\Psi$ changes sign. Across the cut, both $\tau_i$ and $\rho_i$ change, hence $\eta_i$ is unchanged; such an edge is monochromatic precisely when the corresponding
edge of $Q_k$ is monochromatic under $\chi$.  Since the cut is a matching, at a vertex $x=(x_1,\ldots,x_k)$ there is at most one cut edge in each factor direction. Each monochromatic incident edge corresponds to a distinct monochromatic edge at $(\tau_1(x_1),\ldots,\tau_k(x_k))$ in $Q_k$. Hence there are at most $D$ such edges.
Delete $\Gamma-t$ positive vertices.  Passing to an induced subgraph cannot increase maximum degree, so this produces a set of order $N/2+t$ and induced maximum degree at most $D$.
\end{proof}

For even paths, the best one-factor matching-cut imbalance has a simple closed form.

\begin{proposition}\label{prop:path-gamma}
Let $r\ge1$, let $P_{2r}=v_1\cdots v_{2r}$, and put
$\rho(v_i)=(-1)^{i+1}$.  Among all maps
$\tau:V(P_{2r})\to\{0,1\}$ whose $\tau$-cut is a matching,
$$
 \max_\tau\left|\sum_{\tau(v)=0}\rho(v)\right|
 =\left\lceil\frac r3\right\rceil.
$$
\end{proposition}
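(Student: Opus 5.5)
The plan is to change variables from $\tau$ to the twisted sign word $\eta=(-1)^{\tau}\rho$, exactly as in the proof of Theorem~\ref{thm:matching-cut-ft}. This turns the quantity into half the sign sum of a $\pm1$ word with a simple local constraint. Write $S=\sum_{\tau(v)=0}\rho(v)$.

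\emph{Step 1: reduction to a word with short runs.} Since $P_{2r}$ is balanced, $\sum_v\rho(v)=0$, so $\sum_{\tau(v)=1}\rho(v)=-S$. With $\eta(v_i)=(-1)^{\tau(v_i)}\rho(v_i)$, the $\eta$-sum on each $\tau$-fiber equals $S$, and therefore
$$
 S=\tfrac12\sum_{i=1}^{2r}\eta(v_i).
$$
Across an edge $v_iv_{i+1}$, the character $\rho$ always changes. Hence $\eta(v_i)=\eta(v_{i+1})$ exactly when the edge lies in the $\tau$-cut. The cut is a matching precisely when no vertex $v_i$ with $2\le i\le 2r-1$ has both incident edges in the cut. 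Equivalently, the word $\eta(v_1)\cdots\eta(v_{2r})$ contains no three consecutive equal letters, so every run has length at most $2$.

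Conversely, let $w$ be any $\pm1$ word of length $2r$ with all runs of length at most $2$. Set $\tau(v_1)=0$ and flip $\tau$ exactly across the equal adjacent pairs of $w$. This gives a $\tau$ whose cut is a matching and whose $\eta$ equals $\pm w$. Thus
$$
 \max_\tau |S|=\tfrac12\max\Bigl\{\sum_i w_i : w\in\{\pm1\}^{2r},\ \text{all runs of length}\le 2\Bigr\}.
$$

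\emph{Step 2: upper bound.} Let $a$ and $b$ be the numbers of $+$ and $-$ letters in $w$, so $a+b=2r$. The $+$-runs are separated by $-$-runs, so there are at most $b+1$ of them, each of length at most $2$. Hence $a\le 2(b+1)$, which forces $b\ge\lceil(2r-2)/3\rceil$. It follows that
$$
 \tfrac12\sum_i w_i=\tfrac12(a-b)=r-b\le r-\Bigl\lceil\tfrac{2r-2}{3}\Bigr\rceil .
$$
A check of the three residues of $r$ modulo $3$ shows that the right-hand side equals $\lceil r/3\rceil$. Replacing $w$ by $-w$ handles the absolute value.

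\emph{Step 3: construction.} Take $w$ to be the prefix of length $2r$ of the periodic word $++-++-\cdots$. Its runs have length at most $2$, and it contains exactly $\lceil(2r-2)/3\rceil$ minus signs, so it attains the bound. Step 1 converts $w$ back into an admissible $\tau$.

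The main point needing care is the equivalence in Step 1. One must check both that the fiber sums of $\eta$ really coincide (this uses the balance of $\rho$) and that the matching condition on the cut is exactly the run-length condition. The endpoints $v_1$ and $v_{2r}$ have degree $1$, so they impose no constraint. After that, the counting in Step 2 is routine.
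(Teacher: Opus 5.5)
Your proof is correct, but it proves the upper bound by a different route than the paper.

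The paper works directly with $\tau$. It groups vertices along the perfect matching $\{v_{2j-1}v_{2j}\}$ and writes the fiber sum as $\sum_j a_j$ with $a_j=\tau(v_{2j})-\tau(v_{2j-1})\in\{-1,0,1\}$. It then shows that nonzero $a_j$'s at index distance one or two have opposite signs, because edges adjacent to a cut edge cannot themselves be cut. It finishes by splitting $[r]$ into windows of three consecutive indices, each contributing at most one in absolute value.

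You instead pass to the twisted word $\eta=(-1)^{\tau}\rho$, which is the same twist used in the proof of Theorem~\ref{thm:matching-cut-ft}. You note that the cut edges are exactly the equal adjacent pairs of $\eta$. This gives an exact correspondence, up to global sign, between admissible $\tau$ and $\pm1$ words of length $2r$ with all runs of length at most two. The bound then comes from the global run count $a\le 2(b+1)$.

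Your reformulation describes every attainable value, not just the maximum, and makes the extremal configuration transparent. The word $++-++-\cdots$ is precisely the $\eta$ produced by the paper's construction, which flips $\tau$ across the edges $v_sv_{s+1}$ with $s\equiv1\pmod 3$.

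The paper's window argument is more local and yields $\lceil r/3\rceil$ directly as the number of windows. Your argument instead needs the residue check $r-\lceil(2r-2)/3\rceil=\lceil r/3\rceil$, which you correctly carry out.
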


\begin{proof}
Fix an admissible map $\tau$, and set
$a_j=\tau(v_{2j})-\tau(v_{2j-1})$. Then
$$
\sum_{\tau(v)=0}\rho(v)=\sum_{j=1}^r a_j.
$$
If $a_j$ and $a_{j+1}$ are nonzero, the intervening edge
$v_{2j}v_{2j+1}$ is not in the $\tau$-cut, so
$a_{j+1}=-a_j$. Likewise, if $a_j,a_{j+2}\ne0$ and
$a_{j+1}=0$, the three intervening edges are not in the
cut, giving $a_{j+2}=-a_j$. Thus two consecutive nonzero terms whose indices differ by one or two have opposite signs. In particular, the nonzero terms within any three consecutive positions alternate, so their sum has absolute value at most one. Partitioning $[r]$ into consecutive blocks of
at most three indices yields
$$
\left|\sum_{j=1}^r a_j\right|
   \le \left\lceil\frac r3\right\rceil.
$$

For equality, let $\tau(v_1)=0$ and change its value
precisely across the edges $v_sv_{s+1}$ with
$s\equiv1\pmod 3$. These edges form a matching, and the
resulting sequence satisfies $a_j=1$ for
$j\equiv1\pmod 3$ and $a_j=0$ otherwise. Hence
$\sum_{j=1}^r a_j=\lceil r/3\rceil$.
\end{proof}

\begin{corollary}\label{cor:path-ft-range}
Let $k\ge1 $ and $r_1,\ldots,r_k\ge1$, and set
$\Gamma=\prod_{i=1}^k\left\lceil\frac{r_i}{3}\right\rceil$.
Then for $1\le t\le\Gamma$
$$
 f_t(P_{2r_1}\square\cdots\square P_{2r_k})
 =\lceil\sqrt{k}\rceil.
$$
\end{corollary}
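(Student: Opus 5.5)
This is a direct application of Theorem~\ref{thm:matching-cut-ft} with $B_i=P_{2r_i}$. Each even path $P_{2r_i}$ is connected and bipartite, and it has a perfect matching. For each $i$ I take the bipartition character $\rho_i(v_j)=(-1)^{j+1}$ of Proposition~\ref{prop:path-gamma}. I take $\tau_i$ to be the extremal map built in the equality part of that proof: set $\tau_i(v_1)=0$ and switch the value of $\tau_i$ exactly across the edges $v_sv_{s+1}$ with $s\equiv 1\pmod 3$.

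The steps, in order, are as follows.

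First, I check that the $\tau_i$-cut is a matching. The cut edges start at the indices $s=1,4,7,\ldots$, which are three apart, so no two cut edges share a vertex.

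Second, I compute the factor imbalance. Proposition~\ref{prop:path-gamma} gives
$$\gamma_i=\Bigl|\sum_{\tau_i(v)=0}\rho_i(v)\Bigr|=\lceil r_i/3\rceil .$$
This is at least $1$ because $r_i\ge1$, so every $\gamma_i$ is positive.

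Third, the product of these imbalances is
$$\Gamma=\prod_{i=1}^k\gamma_i=\prod_{i=1}^k\lceil r_i/3\rceil .$$
Theorem~\ref{thm:matching-cut-ft} then gives $f_t=\lceil\sqrt{k}\rceil$ for every $1\le t\le\Gamma$. That theorem's proof already checks that all such $t$ lie in the domain of $f_t$, namely $\Gamma\le N/2$.

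The only point needing care is the second step. The equality construction in Proposition~\ref{prop:path-gamma} must be read as giving a specific admissible $\tau_i$ attaining $\lceil r_i/3\rceil$, not merely an upper bound. That construction shows $a_j=1$ for $j\equiv1\pmod 3$ and $a_j=0$ otherwise. Hence $\sum_{\tau_i=0}\rho_i=\lceil r_i/3\rceil>0$, which also means no sign reversal of $\rho_i$ is needed.
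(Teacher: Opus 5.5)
Your proposal is correct and is the argument the paper intends: it states this corollary without a separate proof, as the combination of Theorem~\ref{thm:matching-cut-ft} with the extremal map from the equality part of Proposition~\ref{prop:path-gamma}. Your check of the construction is also right: the cut edges $v_1v_2, v_4v_5,\ldots$ are disjoint, $a_j=1$ exactly when $j\equiv1\pmod 3$, so $\gamma_i=\lceil r_i/3\rceil\ge1$, and the theorem's own argument confirms that $\Gamma\le N/2$.
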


A second construction uses sets of separated vertices of small degree.
The following lemma isolates the common argument used below.

\begin{lemma}
\label{lem:separated-blocks}
Let $D,q\ge1$ be integers, and let $X_1,\ldots,X_q$ be bipartite graphs
with bipartition characters
$\rho_j$ satisfying $\sum_{x\in V(X_j)}\rho_j(x)=0$.
For each $j$, let $Z_j\subseteq V(X_j)$ be a nonempty set on which
$\rho_j$ is constant.  Suppose that distinct vertices of $Z_j$ are
at distance at least three in $X_j$, and that every vertex of $Z_j$
has degree at most $D$ in $X_j$.  If $q\le D$, then
$X=X_1\square\cdots\square X_q$ has a coloring
$\sigma:V(X)\to\{-1,1\}$ with
$$
 \left|\sum_{x\in V(X)}\sigma(x)\right|
 =2\prod_{j=1}^q|Z_j|
$$
and monochromatic degree at most $D$ at every vertex.
\end{lemma}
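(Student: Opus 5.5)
Mimic the CFGS construction, with the role of a block's all-zero point played by the separated set $Z_j$. In each factor $X_j$ define $g_j:V(X_j)\to\{0,1\}$ by $g_j(x)=1$ iff $x\in Z_j$; think of $g_j$ as the complement of an OR gate. Put $F(x_1,\ldots,x_q)=\prod_{j=1}^q g_j(x_j)$, so $F=1$ exactly on $Z_1\times\cdots\times Z_q$. Then set
$$
 \sigma(x)=\epsilon\,(-1)^{F(x)}\prod_{j=1}^q\rho_j(x_j),
$$
with a global sign $\epsilon\in\{-1,1\}$ chosen at the end. This is the analogue of $\chi_F$ in Lemma~\ref{lem:parity-twist-excess}: the product $\prod_j\rho_j$ is a bipartition character of $X$, and twisting it by $(-1)^F$ turns monochromatic edges into edges across which $F$ changes value.

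For the imbalance, use $(-1)^F=1-2F$. The sum of $\prod_j\rho_j(x_j)$ over $V(X)$ factorizes as $\prod_j\sum_{x\in V(X_j)}\rho_j(x)=0$. The remaining term is $-2\sum_x F(x)\prod_j\rho_j(x_j)=-2\prod_j\sum_{z\in Z_j}\rho_j(z)$. Since $\rho_j$ is constant on $Z_j$, each inner sum equals $\pm|Z_j|$. Hence $|\sum_x\sigma(x)|=2\prod_j|Z_j|$ for either choice of $\epsilon$.

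For the degree bound, note that an edge of $X$ changes exactly one coordinate $x_j$ and flips $\rho_j$. The edge is therefore monochromatic under $\sigma$ iff $F$ changes across it. Fix a vertex $x$ and split into two cases.
\begin{itemize}
\item If $F(x)=1$, then every coordinate lies in its $Z_j$. Any move in direction $j$ leaves $Z_j$: by the distance-three hypothesis, distinct vertices of $Z_j$ are not adjacent. So every incident edge is monochromatic, and the monochromatic degree is $\sum_j\deg_{X_j}(x_j)$.
\item If $F(x)=0$, let $J=\{j:x_j\notin Z_j\}\neq\emptyset$. If $|J|\ge2$, no single move makes $F=1$, so the monochromatic degree is $0$. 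If $J=\{j\}$, monochromatic edges are the edges from $x_j$ into $Z_j$. Since vertices of $Z_j$ are at distance at least three, $x_j$ has at most one neighbor in $Z_j$, so the monochromatic degree is at most $1\le D$.
\end{itemize}

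The obstacle is the case $F(x)=1$: the bound $\sum_j\deg(x_j)$ can be as large as $qD$, which exceeds $D$. To fix this, swap the roles of the two cases, exactly as in Lemma~\ref{lem:cube-coloring}, where the all-zero block has sensitivity at most $D$ and conjunction points have sensitivity at most $q$. Replace $g_j$ by its complement, $h_j=1-g_j$, and set $F=\prod_j h_j$. Then $F=0$ exactly when some coordinate lies in its $Z_j$.
\begin{itemize}
\item At a point with $F=1$, every $x_j\notin Z_j$, and a sensitive direction $j$ needs a neighbor of $x_j$ in $Z_j$. There is at most one such neighbor per direction, giving at most $q\le D$.
\item At a point with $F=0$ and exactly one $x_j\in Z_j$, the sensitive edges are those moving $x_j$ off $Z_j$. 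There are at most $\deg(x_j)\le D$ of them, and moves in other directions keep $x_j\in Z_j$.
\item At a point with two or more coordinates in their $Z$-sets, there are no sensitive edges.
\end{itemize}

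The imbalance is unchanged, since $\prod_j(1-g_j)$ expands by inclusion–exclusion. Every term that omits some factor $g_j$ has zero signed sum, because it contains the full $\rho_j$-sum over $X_j$. Only the term $(-1)^q\prod_j g_j$ survives, so the sum is again $\pm2\prod_j|Z_j|$. Choosing $\epsilon$ fixes the sign, which completes the plan.
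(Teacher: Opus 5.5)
Your final construction, with $F=\prod_j(1-g_j)$ vanishing exactly when some coordinate lies in its $Z_j$ and $\sigma=\epsilon(-1)^F\prod_j\rho_j$, is the paper's coloring, and your three-case degree count matches the paper's argument: at most $q\le D$ where $F=1$, at most $D$ with exactly one coordinate in its $Z_j$, and none otherwise. The only differences are cosmetic: you include an abandoned first attempt, and you compute the imbalance by inclusion--exclusion, whereas the paper factors the sum directly using $\sum_{x\in V(X_j)}\rho_j(x)(1-g_j(x))=-\sum_{z\in Z_j}\rho_j(z)$.
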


\begin{proof}
Let $b_j$ be zero on $Z_j$ and one elsewhere, and set
$$
 F(x)=\prod_{j=1}^q b_j(x_j),\quad
 \rho(x)=\prod_{j=1}^q\rho_j(x_j),\quad
 \sigma(x)=\rho(x)(-1)^{F(x)}.
$$
Since each block is balanced and $\rho_j$ is constant on $Z_j$,
$$
 \left|\sum_{x\in V(X_j)}\rho_j(x)b_j(x)\right|=|Z_j|.
$$
The sums factor over the blocks.  Using $(-1)^{F(x)}=1-2F(x)$ and
$\sum_x\rho(x)=0$, we obtain
$$
 \left|\sum_x\sigma(x)\right|
 =2\left|\sum_x\rho(x)F(x)\right|
 =2\prod_{j=1}^q|Z_j|.
$$

Across every edge, $\rho$ changes sign.  Thus an edge is
monochromatic under $\sigma$ exactly when $F$ changes.
If at least two block values are zero, changing one coordinate
cannot change $F$.  If exactly one block value is zero, only
edges leaving that block's zero set can change $F$, and there
are at most $D$ such edges.  If no block value is zero, each
block contributes at most one edge on which $F$ changes:
a vertex adjacent to two distinct points of $Z_j$ would put
those points at distance at most two.  The total is therefore
at most $q\le D$.
\end{proof}

We next record two elementary facts about leaves.

\begin{lemma}\label{lem:pm-tree-leaves}
Let $T$ be a tree with a perfect matching and bipartition $(X,Y)$.  Then each
of $X$ and $Y$ contains a leaf.  Moreover, if $T\ne K_2$, then any two
distinct leaves of $T$ are at distance at least three.
\end{lemma}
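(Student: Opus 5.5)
The plan is to use two standard facts about a tree $T$ with a perfect matching $M$. First, every leaf is matched to its unique neighbor. Second, no vertex is adjacent to two leaves, since both leaves would then have to be matched to that same vertex. The case $T=K_2$ is trivial for the first assertion: its two vertices are leaves lying in different classes.

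For the first assertion, take a longest path $w_0w_1\cdots w_m$ in $T$ and assume $T\neq K_2$, so $m\ge 2$. By maximality, $w_0$ and $w_m$ are leaves. I will aim to find two leaves at odd distance, since a path of odd length has its ends in opposite bipartition classes. If $m$ is odd, the two ends $w_0$ and $w_m$ already do the job.

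The main obstacle is the case where $m$ is even, and here I would argue directly through the matching. Consider $w_1$. The leaf $w_0$ is matched to $w_1$. By maximality of the path, every neighbor of $w_1$ other than $w_2$ is a leaf, and by the second fact $w_0$ is the only leaf adjacent to $w_1$. Hence $\deg w_1=2$, and the edges $w_0w_1$ and $w_2w_3$ are both in $M$.

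A cleaner route, which I will use instead, is to root $T$ at the leaf $w_0$ and pick a deepest leaf $z$. The parent $p$ of $z$ is matched to $z$. Every other child of $p$ would be a leaf at the same maximum depth, and this is impossible by the second fact. So $p$ has degree $2$, and its parent $g$ is matched elsewhere. The vertices $w_0$ and $z$ are leaves whose distance is the depth of $z$. If that depth is even, I replace $z$ by a leaf from another class by looking at $g$. Every child of $g$ other than $p$ has a subtree of depth at most $2$. A child $c$ of $g$ that is itself a leaf would have to be matched to $g$, and such a $c$ lies in the class of $p$, giving a leaf at odd depth.

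If no such leaf exists, I will instead use an induction that avoids this casework. Deleting the leaf $z$ and its partner $p$ leaves a tree $T'$ with a perfect matching, and the bipartition classes of $T'$ are restrictions of those of $T$. By induction each class of $T'$ contains a leaf of $T'$. A leaf of $T'$ is a leaf of $T$ unless it is $g$, and $z$ covers the class of $p$. Making this induction go through, with $g$ as the only possible failure point, is the step I expect to need the most care.

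For the second assertion, suppose $T\ne K_2$ and let $\ell_1,\ell_2$ be distinct leaves. Distance $1$ would force $T=K_2$. Distance $2$ would mean $\ell_1$ and $\ell_2$ have a common neighbor, and by the second fact both would then be matched to that same vertex, which is impossible. Hence their distance is at least $3$.
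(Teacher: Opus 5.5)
Your argument for the second assertion is correct and coincides with the paper's. The first assertion, however, is not proved as written, because none of the three routes you open is closed.

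\begin{itemize}
\item The longest-path paragraph contains a false step: $w_2w_3\in M$ can fail. Take the path $w_0w_1\cdots w_6$ with an extra leaf $a$ attached at $w_2$. Its perfect matching is $\{w_0w_1,w_2a,w_3w_4,w_5w_6\}$.
\item The rooted-tree argument stops at ``if no such leaf exists''.
\item In the induction you explicitly leave the decisive step open. The one sentence you give for it is backwards. Since $z$ is adjacent to $p$, $z$ lies in the class \emph{opposite} to $p$, which is the class of $g$.
\end{itemize}

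That observation is exactly what closes the induction, and it makes all the parity casework unnecessary. You correctly showed $\deg_T p=2$ via the deepest leaf. Hence $T'=T-\{z,p\}$ is a tree with perfect matching $M-zp$. Moreover, $\deg_{T'}(x)=\deg_T(x)$ for every $x\in V(T')\setminus\{g\}$. Fix a class $C$ and let $\ell$ be a leaf of $T'$ in $C$, given by induction. If $\ell\ne g$, then $\ell$ is already a leaf of $T$. If $\ell=g$, then $C$ is the class of $g$, and $z\in C$ because $d_T(z,g)=2$. So $g$ is not a genuine failure point. With this line added and the abandoned routes deleted, your induction is a valid alternative proof.

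The paper's argument is shorter and non-inductive. It uses $|X|=|Y|=n$ and $|E(T)|=2n-1$. Since every edge has exactly one end in $X$, $|E(T)|=\sum_{v\in X}\deg v$, which would be at least $2n$ if $X$ contained no leaf. Your route instead uses the forced pairing of a deepest leaf with its parent. The paper's count needs only balance and the edge count of a tree.
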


\begin{proof}
Write $|X|=|Y|=n$.  If, say, $X$ contained no leaf, then every vertex of $X$ would have degree at least two. Thus
$$
 |E(T)|=\sum_{v\in X}d(v)\ge 2n,
$$
contrary to $|E(T)|=2n-1$.  The argument for $Y$ is the same.  Two leaves can be adjacent only when the connected tree is $K_2$.  Two leaves at distance two have a common neighbor, but then both incident leaf edges would be forced into every perfect matching, which is impossible.
\end{proof}

We next construct many even-weight binary words at mutual distance at least four. These words will specify separated tuples of leaves in the product.

\begin{lemma}\label{lem:even-code}
For every integer $D\ge1$, put $m=\lceil\log_2D\rceil$.
There is a set $\mathcal C\subseteq\{0,1\}^D$ of at least
$2^{D-m-1}$ even-weight words such that distinct words have
Hamming distance at least four.
\end{lemma}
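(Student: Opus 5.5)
Our plan is to use an extended Hamming code and pass to its even-weight subcode. We want at least $2^{D-m-1}$ even-weight words in $\{0,1\}^D$ with pairwise distance at least four, where $m=\lceil\log_2D\rceil$. Small cases come first. For $D\le 3$ we have $2^{D-m-1}\le 1$, since $D=1,2,3$ give exponents $-1,0,0$. For these $D$ the set $\{0^D\}$ suffices, because it consists of one even-weight word and the distance condition is vacuous. From now on assume $D\ge 4$, so $m\ge 2$.

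First, a code of length $D-1$ with minimum distance three. Since $D-1\le 2^m-1$, we index the $D-1$ coordinates by distinct nonzero vectors $h_1,\ldots,h_{D-1}\in\mathbb{F}_2^m$. Let $C_0=\{y\in\{0,1\}^{D-1}:\sum_i y_ih_i=0\}$ be the kernel of the $m\times(D-1)$ parity-check matrix $H=(h_1\cdots h_{D-1})$. Then $C_0$ is linear of dimension at least $D-1-m$. Its minimum distance is at least three, because no single column or sum of two distinct columns of $H$ is zero.

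Second, extend by a parity bit. Map $y\mapsto (y,|y| \bmod 2)\in\{0,1\}^D$. The resulting linear code $C_1$ has only even-weight words, dimension at least $D-1-m$, and minimum distance at least four: odd-weight words of weight three acquire a fourth one, and every nonzero weight is even and at least three. This gives $|C_1|\ge 2^{D-m-1}$, and we take $\mathcal C=C_1$. Since $C_1$ is linear, it suffices to bound nonzero weights rather than all pairwise distances.

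The only point needing care is the dimension count when $D-1$ is not of the form $2^m-1$, that is, when we use a shortened Hamming code. This is handled by the rank bound $\operatorname{rank}H\le m$. We also need $D-1\ge 1$ and $m\ge1$ for the construction to make sense, which is why the cases $D\le 3$ were treated separately.
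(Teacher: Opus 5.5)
Your proof is correct and is essentially the paper's construction: the paper takes the kernel of the constraints $\sum_i c_i=0$ and $\sum_i c_ih_i=0$ for distinct $h_1,\ldots,h_D\in\mathbb F_2^m$, and choosing $h_D=0$ recovers exactly your parity-extended shortened Hamming code, while the paper's uniform version needs no separate small cases (it handles $D=1$ via $\mathbb F_2^0=\{0\}$). One harmless slip: for $D=1$ you have $m=\lceil\log_2 1\rceil=0$, so the exponent $D-m-1$ is $0$ rather than $-1$, but $\{0^D\}$ still supplies the single required word.
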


\begin{proof}
Choose distinct vectors $h_1,\ldots,h_D\in\mathbb F_2^m$ and set
$$
 \mathcal C=\left\{c\in\mathbb F_2^D:
 \sum_{i=1}^D c_i=0,\quad
 \sum_{i=1}^D c_i h_i=0\right\}.
$$
These are at most $m+1$ linear constraints, so
$|\mathcal C|\ge2^{D-m-1}$.  Every word has even weight.
A word of weight two would imply $h_i=h_j$ for two distinct
indices, which is impossible.  Since $\mathcal C$ is a linear space,
the difference of two distinct words has weight at least four.
For $D=1$, take $\mathbb F_2^0=\{0\}$; the construction gives
$\mathcal C=\{0\}$.
\end{proof}

Theorem~\ref{thm:fixed-excess-intro} (i) gives an explicit fixed-excess range by using this code in one block of factors.

\begin{proof}[Proof of Theorem~\ref{thm:fixed-excess-intro} (i)]
Let $G=T_1\square\cdots\square T_k$, 
$m=\lceil\log_2D\rceil$ and $N=|V(G)|$.
Since $D=\lceil\sqrt{k}\rceil\le k$, the hypotheses imply
$t\le2^{D-1}\le2^{k-1}\le N/2$, so $f_t(G)$ is defined.
Moreover, $\alpha(G)=N/2$, and
Lemma~\ref{lem:cube-cell-bound} gives $f_t(G)\ge D$.

Fix bipartition characters $\rho_i$ of the factors.  By
Lemma~\ref{lem:pm-tree-leaves}, each $T_i$ has leaves
$a_i^0,a_i^1$ in opposite bipartition classes.
Since $D\le k\le D^2$, partition $[k]$ into nonempty blocks
$I_1,\ldots,I_q$ with
$$
 |I_1|=D,\quad |I_j|\le D\ (j\ge2),\quad q\le D.
$$
Such a partition exists because, after reserving $D$ indices,
the remaining at most $D(D-1)$ indices fit into at most
$D-1$ further blocks.  Write
$$
 X_j=\mathop{\square}_{i\in I_j}T_i,
 \quad \rho_{I_j}(x)=\prod_{i\in I_j}\rho_i(x_i).
$$
Each $X_j$ is balanced.

Identify $I_1$ with $[D]$, and choose $t$ words from the code
in Lemma~\ref{lem:even-code}.  For the chosen set $\mathcal C'$, put
$$
 Z_1=\{(a_i^{c_i})_{i\in I_1}:c\in\mathcal C'\}
 \subseteq V(X_1).
$$
The words have even weight, so $\rho_{I_1}$ is constant on $Z_1$.
The distance between two resulting tuples is at least the
Hamming distance between their words, and hence is at least four.
Every point of $Z_1$ has degree $D$, since all its coordinates
are leaves.  For $j\ge2$, let $Z_j$ consist of one all-leaf tuple
in $X_j$; its degree is $|I_j|\le D$.

Lemma~\ref{lem:separated-blocks} now gives a coloring of $G$
with imbalance $2t$ and monochromatic degree at most $D$.
Its larger class has $N/2+t=\alpha(G)+t$ vertices, proving
$f_t(G)\le D$.
\end{proof}

The next argument proves Theorem~\ref{thm:fixed-excess-intro} (ii) for tree products. In particular, it determines $f_2$ for products of even-order paths even when the matching-cut range above reaches only $t=1$.

\begin{proof}[Proof of Theorem~\ref{thm:fixed-excess-intro} (ii)]
Let $G=T_1\square\cdots\square T_k$ and $N=|V(G)|$. Then $\alpha(G)=N/2$ and
Lemma~\ref{lem:cube-cell-bound} gives $f_2(G)\ge D$.

Fix bipartition characters $\rho_i$ and relabel the factors so
that $T_1\ne K_2$.  Since $2\le D\le k\le D^2$, choose a
partition $I_1,\ldots,I_q$ of $[k]$ with
$$
 q\le D,\quad |I_j|\le D,\quad
 |I_1|=D,\quad 1,2\in I_1.
$$
Write $X_j=\mathop{\square}_{i\in I_j}T_i$ and
$\rho_{I_j}=\prod_{i\in I_j}\rho_i$.

Take distinct leaves $u,v$ of $T_1$.  If
$\rho_1(u)=\rho_1(v)$, use the same fixed leaf in every other
coordinate of two tuples in $X_1$.  If
$\rho_1(u)=-\rho_1(v)$, choose leaves $w,w'$ of $T_2$ in opposite
bipartition classes and use $(u,w)$ and $(v,w')$ in the first two
coordinates; keep every remaining coordinate fixed at a leaf.
In either case we obtain all-leaf tuples $z_1,z'_1$ with
$$
 \rho_{I_1}(z_1)=\rho_{I_1}(z'_1),\quad
 d_{X_1}(z_1,z'_1)\ge4.
$$
Indeed, Lemma~\ref{lem:pm-tree-leaves} gives $d_{T_1}(u,v)\ge3$.
In the first case this distance is even and hence at least four;
in the second case the second coordinate contributes at least one
more to the product distance.

Set $Z_1=\{z_1,z'_1\}$, and for each $j\ge2$ let $Z_j$ consist
of one all-leaf tuple in $X_j$.  Every point of $Z_j$ has degree
$|I_j|\le D$.  Lemma~\ref{lem:separated-blocks} gives a coloring
with imbalance four and monochromatic degree at most $D$.
Its larger class has $N/2+2$ vertices, proving the upper bound.
\end{proof}

\begin{corollary}\label{cor:even-path-ftwo}
Let $k\ge2$ and $r_1,\ldots,r_k\ge1$.  If some $r_i\ge2$, then
$$
 f_2(P_{2r_1}\square\cdots\square P_{2r_k})
 =\lceil\sqrt{k}\rceil.
$$
Thus the only product of at least two even paths not covered by this corollary
is the hypercube $P_2^{\square k}=Q_k$.
\end{corollary}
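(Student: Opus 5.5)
This is a direct specialization of Theorem~\ref{thm:fixed-excess-intro}~(ii). The plan is to check its hypotheses and then dispose of the remark about the uncovered case.

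\emph{Applying part~(ii).} Every even path $P_{2r_i}$ is a tree, and the edges $v_1v_2,v_3v_4,\ldots$ form a perfect matching. Hence $P_{2r_1},\ldots,P_{2r_k}$ are trees with perfect matchings. The assumption that some $r_i\ge2$ means the corresponding factor $P_{2r_i}$ has at least four vertices, so it is not $K_2$. Together with $k\ge2$, all hypotheses of Theorem~\ref{thm:fixed-excess-intro}~(ii) hold. That theorem then yields $f_2(P_{2r_1}\square\cdots\square P_{2r_k})=\lceil\sqrt{k}\rceil$.

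\emph{Well-definedness.} One should also confirm that $f_2$ is defined, i.e.\ that $\alpha(G)+2\le |V(G)|$. Since $\alpha(G)=N/2$ and $N\ge 4$ when $k\ge 2$, this is immediate. In any case it is implicit in the proof of part~(ii).

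\emph{The uncovered case.} The final sentence concerns the complementary situation. If every $r_i=1$, then each factor is $P_2=K_2$, and the product is $P_2^{\square k}=Q_k$. Part~(ii) does not apply there, so this is exactly the single excluded product.

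There is no genuine obstacle. The only point needing care is matching the hypothesis ``some $r_i\ge2$'' to ``some factor is not $K_2$'', which holds because $P_{2r}=K_2$ precisely when $r=1$.
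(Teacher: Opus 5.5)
Your proposal is correct and takes the same route as the paper. The paper states this corollary immediately after proving Theorem~\ref{thm:fixed-excess-intro}~(ii) and reads it off from that theorem exactly as you do: even paths are trees with perfect matchings, $P_{2r}\ne K_2$ precisely when $r\ge2$, and the only excluded product is $P_2^{\square k}=Q_k$.
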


The restriction to at least two factors in Corollary~\ref{cor:even-path-ftwo} is necessary. For a single even-order path, the values are as follows.

\begin{proposition}\label{prop:one-path-ftwo}
For $r\ge 2$,
$$
 f_2(P_{2r})=
 \begin{cases}
  2,& r=2,3,\\
  1,& r\ge 4.
 \end{cases}
$$
\end{proposition}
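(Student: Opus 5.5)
We need $f_2(P_{2r})$ for $r\ge 2$. Here $\alpha(P_{2r})=r$, so we choose $r+2$ vertices of the path $v_1\cdots v_{2r}$. An induced subgraph of a path is a disjoint union of subpaths, so its maximum degree is at most $2$; it is at most $1$ exactly when every run of consecutive chosen vertices has length at most $2$. Since $r+2>r=\alpha$, the set is never independent, so $f_2\ge 1$ always. The whole problem therefore reduces to one question: what is the largest subset of $\{1,\dots,2r\}$ containing no three consecutive integers? Call this maximum $m(2r)$. Then $f_2(P_{2r})=1$ iff $m(2r)\ge r+2$, and otherwise $f_2=2$. The set $U=V(P_{2r})$ itself shows that $f_2\le 2$ is always attainable, since $r+2\le 2r$ for $r\ge 2$.

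Next I compute $m(n)$, which is the dissociation number of $P_n$.

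\emph{Upper bound.} Split $1,\dots,n$ into $\lfloor n/3\rfloor$ consecutive triples plus a remainder of $n-3\lfloor n/3\rfloor$ elements. Each triple contains at most two chosen vertices, and the remainder (at most two elements) contributes at most its size. This gives $m(n)\le n-\lfloor n/3\rfloor=\lceil 2n/3\rceil$.

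\emph{Lower bound.} The pattern $++-++-\cdots$ (choose $i$ iff $i\not\equiv 0\pmod 3$) attains this bound.

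Hence $m(2r)=\lceil 4r/3\rceil$. This is the same $\psi_3$ count mentioned in the introduction.

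It remains to compare $\lceil 4r/3\rceil$ with $r+2$, which holds iff $\lceil r/3\rceil\ge 2$, i.e.\ iff $r\ge 4$. Explicitly:
\begin{itemize}
\item[] for $r=2$, $\lceil 8/3\rceil=3<4$;
\item[] for $r=3$, $4<5$;
\item[] for $r=4$, $\lceil 16/3\rceil=6=r+2$;
\item[] in general, $\lceil 4r/3\rceil-r=\lceil r/3\rceil\ge 2$ for $r\ge 4$.
\end{itemize}
For $r\ge 4$, taking any $r+2$ vertices of the optimal pattern gives an induced subgraph of maximum degree $1$ (deleting vertices does not increase degree), so $f_2=1$. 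For $r=2,3$, no set of size $r+2$ avoids three consecutive vertices, so every such set induces a vertex of degree $2$, giving $f_2=2$.

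The only point needing care is the upper bound on $m(n)$, specifically the handling of the remainder block. Since the triple partition is standard, I expect no real obstacle; the remaining work is the small-case arithmetic at $r=2,3$, which could also be checked directly.
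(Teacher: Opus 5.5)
Your proposal is correct and follows essentially the same route as the paper's proof. Both compute $\operatorname{diss}(P_n)=\lceil 2n/3\rceil$ by partitioning into consecutive triples and using the repeating $110$ pattern. Both then compare $\lceil 4r/3\rceil$ with $r+2=\alpha(P_{2r})+2$, using $\Delta(P_{2r})=2$ for the upper bound when $r=2,3$ and the fact that more than $\alpha$ vertices cannot be independent for the lower bound when $r\ge 4$.
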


\begin{proof}
The elementary path formula
$\operatorname{diss}(P_n)=\lceil2n/3\rceil$ follows by partitioning the path into consecutive triples, with a final block of one or two vertices:
a maximum-degree-one set contains at most two vertices from each triple, and the repeating word $110$ (where $1$ means selected) attains the bound including the remainder one or two vertices.
For $n=4,6$, $\operatorname{diss}(P_n)=\lceil2n/3\rceil < n/2+2=\alpha(P_n)+2$, so $f_2(P_n)\ge2$; the reverse inequality follows from
$\Delta(P_n)=2$.  If $n=2r$ with $r\ge4$, then
$$
 \operatorname{diss}(P_{2r})=\left\lceil\frac{4r}{3}\right\rceil
 \ge r+2=\alpha(P_{2r})+2,
$$
so a maximum dissociation set contains a subset of exactly
$r+2=\alpha(P_{2r})+2$ vertices and $f_2(P_{2r})\le1$. A set of more than $\alpha(P_{2r})$ vertices cannot be independent.
\end{proof}

\section{The 2-sensitivity of the hypercube}\label{sec:cube-excess-two}

Corollary~\ref{cor:even-path-ftwo} does not cover products in which every factor is $K_2$. We now treat these products using the Boolean notation from Section~\ref{sec:cube}.

\begin{proposition}\label{prop:cube-excess-two-large}
$f_2(Q_n)=\lceil\sqrt n\rceil$ for $n\ge 10$.
\end{proposition}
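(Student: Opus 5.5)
The lower bound $f_2(Q_n)\ge\lceil\sqrt n\rceil$ is immediate from Lemma~\ref{lem:cube-cell-bound} (equivalently from Theorem~\ref{thm:huang}), since $\alpha(Q_n)=2^{n-1}$ and a set of $2^{n-1}+2$ vertices exceeds half the cube. So the whole task is the upper bound. I would produce a two-coloring of $Q_n$ with imbalance $4$ and monochromatic degree at most $D=\lceil\sqrt n\rceil$, and then keep $2^{n-1}+2$ vertices of the larger class.

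The plan is to apply Lemma~\ref{lem:separated-blocks} with every block a subcube. Since $n\ge10$ we have $D\ge4$, and also $D\le n\le D^2$. As in the proof of Theorem~\ref{thm:fixed-excess-intro}~(i), I would partition $[n]$ into nonempty blocks $I_1,\ldots,I_q$ with
$$
|I_1|=D,\quad |I_j|\le D \ (j\ge2),\quad q\le D.
$$
This is possible because the remaining at most $D(D-1)$ coordinates fit into $D-1$ blocks of size at most $D$. Set $X_j=Q_{|I_j|}$, a balanced bipartite graph with its parity character $\rho_j$. Every vertex of $X_j$ has degree $|I_j|\le D$.

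In $X_1=Q_D$, take $Z_1=\{0^D,\;1111\,0^{D-4}\}$. This uses $D\ge4$, and it is exactly where the hypothesis $n\ge10$ enters. The two words have even weight, so $\rho_1$ is constant on $Z_1$, and their Hamming distance is $4\ge3$. For $j\ge2$, let $Z_j$ be a single vertex of $X_j$. All hypotheses of Lemma~\ref{lem:separated-blocks} then hold. It yields a coloring of $Q_n$ with imbalance $2\cdot 2\cdot 1=4$ and monochromatic degree at most $D$. Hence the larger color class has $2^{n-1}+2$ vertices and induces maximum degree at most $D$, which gives $f_2(Q_n)\le D$.

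Alternatively, the same conclusion follows from Theorem~\ref{thm:fixed-excess-intro}~(i) with $T_i=K_2$ (a tree with a perfect matching). For $D\ge4$ one has $D-\lceil\log_2D\rceil-1\ge1$, so $t=2$ lies in the allowed range. The only delicate point is checking that range: it fails for $D\le3$, i.e.\ $n\le9$, which is why those cases, including the exceptional $n=4,9$, need separate treatment.
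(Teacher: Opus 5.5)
Your proposal is correct and matches the paper's proof, which obtains the lower bound from Huang's theorem and the upper bound by applying Theorem~\ref{thm:fixed-excess-intro}~(i) with every factor $K_2$, after checking that $D\ge4$ gives $2^{D-\lceil\log_2D\rceil-1}\ge2$. That is your ``alternatively'' paragraph. Your first construction is that theorem's proof unwound, using the explicit pair $\{0^D,\,1^40^{D-4}\}$ in place of the code from Lemma~\ref{lem:even-code}.
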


\begin{proof}
For $n\ge 10$, $D=\lceil\sqrt n\rceil\ge 4$ and $2^{D-\lceil\log_2 D\rceil-1}\ge 2$. Applying Theorem~\ref{thm:fixed-excess-intro} (i) with every factor equal to $K_2$ gives the upper bound, and Huang's theorem gives the lower bound.
\end{proof}

\begin{proposition}\label{prop:cube-excess-two-small}
For $n\in\{2,3,5,6,7,8\}$,
$f_2(Q_n)=\lceil\sqrt n\rceil$.
In contrast, $f_2(Q_4)=3$.
\end{proposition}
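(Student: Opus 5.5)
The lower bound $f_2(Q_n)\ge\lceil\sqrt n\rceil$ for every $n$ is Huang's theorem (Theorem~\ref{thm:huang}). Everything else is a matter of constructions, together with one extra lower-bound argument for $n=4$. By Lemma~\ref{lem:parity-twist-excess}, it suffices to exhibit Boolean functions $F$ on $\{0,1\}^n$ with $|A(F)|\ge 2$ and $s(F)\le D=\lceil\sqrt n\rceil$.

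To build these functions, I will use the conjunction rules $A(F\wedge H)=A(F)A(H)$, $s_0(F\wedge H)\le\max\{s_0(F),s_0(H)\}$ and $s_1(F\wedge H)\le s_1(F)+s_1(H)$. The plan is to find small "gadget" functions with $|A|=2$ and to conjoin each one with OR blocks, as in Lemma~\ref{lem:cube-coloring}.

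For $n=2$ ($D=2$), any $3$ vertices of $Q_2$ work, since $\Delta(Q_2)=2$; in fact $4$ vertices also have degree $2$, so the only issue is $|U|=4=\alpha+2$, which is immediate. For $n=3$ ($D=2$) I need $6$ vertices of $Q_3$ with maximum induced degree $2$. Deleting two antipodal vertices leaves a $6$-cycle, which works.

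For $n\in\{5,6,7,8\}$ we have $D=3$. Here I plan to take a gadget $g$ on $4$ variables with $|A(g)|=2$ and $s_0(g),s_1(g)$ small, for instance $s_1(g)\le 2$ and $s_0(g)\le 3$. I would search for $g$ among functions of the form "$g=1$ iff $x$ lies in a small even-weight code"; this is the construction of Lemma~\ref{lem:separated-blocks} with $Z$ two antipodal points of $Q_4$. Then I conjoin $g$ with an OR block of size at most $3$, or use Lemma~\ref{lem:separated-blocks} directly. Concretely, I would split $[n]$ into $q\le 3$ blocks of size at most $3$, chosen so that one block carries two points at distance at least $3$ of equal $\rho$-parity. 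In $Q_3$, two antipodal points have distance $3$ but opposite parity, so such a pair does not exist inside a block of size $3$ alone.

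That parity failure is the crux, and it is exactly why $n=4$ and $n=9$ are exceptional. For $n=5,\dots,8$, I would instead let one block be $Q_4$ or larger, with points $0000$ and $1111$ (distance $4$, same parity, degree $4>3$). This violates the degree hypothesis, so the natural fix is to modify the coloring by hand near those two points. The main obstacle is therefore producing explicit functions for $n=5$ through $8$. I expect to find them by a small computer-free search, for example by taking $F=\mathrm{OR}$-type conjunctions and adjusting $F$ at a few vertices, then checking sensitivity directly.

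For $Q_4$, the upper bound $3$ comes from the general construction above. For the lower bound, I must show that every $10$-vertex subset of $Q_4$ has a vertex of induced degree at least $3$. Equivalently, I must show that no $F$ on $4$ variables has $|A(F)|\ge 2$ and $s(F)\le 2$. I would argue by splitting $Q_4$ into two $Q_3$'s along a coordinate. Then $U$ meets one half in at least $5$ vertices. A $5$-set in $Q_3$ with maximum degree at most $2$ is tightly constrained: it is a $C_6$ minus a vertex, or similar. I would do this case analysis for each of the $4$ directions and show that combining the two halves forces a degree-$3$ vertex. This is the step I expect to be the most laborious.
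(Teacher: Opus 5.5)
There is a genuine gap. Once Huang's theorem supplies the lower bounds, the content of the proposition for $n\in\{5,6,7,8\}$, and of the bound $f_2(Q_4)\le 3$, is a set of explicit constructions, and you give none.

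You do identify the right target: a gadget with $|A|=2$, $s_1\le 2$ and $s_0\le 3$, conjoined with an OR block of size at most $3$. But the one concrete mechanism you propose provably cannot produce it.
\begin{itemize}
\item With $D=3$, Lemma~\ref{lem:separated-blocks} forces every block to have at most three coordinates, because the points of $Z_j$ have degree equal to the block dimension.
\item In $Q_m$ with $m\le 3$, two distinct vertices of the same parity are at distance $2$. So every $Z_j$ is a singleton and the imbalance is only $2$.
\item The antipodal pair $\{0000,1111\}$ has sensitivity $4$ at those points, and ``modify by hand'' or ``a small search'' is not an argument.
\end{itemize}
This parity obstruction is present for every $n$ from $5$ to $9$. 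So it is an obstruction to your method, not an explanation of why $n=4$ and $n=9$ are special.

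The missing ingredient is explicit one-sided gadgets, which the paper supplies.
\begin{itemize}
\item $g_3(x)=1$ iff $|x|\le 1$, with $A=-2$ and $s_0=s_1=2$.
\item $g_4$, vanishing exactly on the vectors of weight at most one and on one weight-two vector, with $A=2$, $s_0\le 3$, $s_1\le 2$.
\item A five-variable $F_5$, equal to $1$ on weights at most two and on four triples that cover every pair of $[5]$ with no four-set containing all of them. It has $A=2$, $s_0\le 3$, $s_1\le 2$.
\end{itemize}
Then $F_5$, $g_3\wedge\operatorname{OR}_3$, $g_4\wedge\operatorname{OR}_3$ and $F_5\wedge\operatorname{OR}_3$ settle $n=5,6,7,8$.

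Even finding a four-variable gadget such as $g_4$ would not finish your plan. For $n=8$, conjoining it with $\operatorname{OR}_4$ gives $s_0=4$, and splitting the last four variables into two OR blocks lets $s_1$ reach $4$. This is why the paper builds $F_5$.

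For $Q_4$, the upper bound does not ``come from the general construction above''. There is no such construction in your proposal, and the separated-blocks route fails for the reason just given. The paper uses $W\times\{0,1\}$, where $W=\{000,001,010,101,110\}$ induces a $P_5$ in $Q_3$; alternatively, $g_4$ alone has $A=2$ and $s\le 3$.

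For the lower bound, your ``equivalently'' is false. $f_2(Q_4)\le 2$ needs only one $10$-set $U$ with $\Delta(Q_4[U])\le 2$ and imposes nothing on its complement. By contrast, $s(F)\le 2$ constrains both color classes, so excluding such $F$ would not prove $f_2(Q_4)\ge 3$ (compare Example~\ref{ex:one-sided-versus-two-sided}).

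Your splitting plan works directly with $U$, however, and can be completed using a single direction.
\begin{itemize}
\item A half with $7$ or $8$ vertices contains a vertex of degree $3$.
\item A $6$-vertex half of maximum degree at most $2$ is $Q_3$ minus an antipodal pair, a $6$-cycle. It allows no cross edges, so the other half, which needs four vertices, has only two available positions.
\item In a $(5,5)$ split each half is an induced $P_5$. The at least two shared positions must be endpoints of both paths. This forces the second half to be the complement of the first together with its two endpoints, and that set contains a vertex with three neighbors inside it.
\end{itemize}

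The paper argues by parity instead. Take $E,O$ to be the parity classes, chosen so that $X=U\cap O$ has $a=|X|\le 5$, and put $Y=E\setminus U$, so $|Y|=a-2$. Every vertex of $X$ has at least two neighbors in $Y$, so $2a\le 4(a-2)$, which forces $a\in\{4,5\}$. Both cases are then excluded by counting common neighbors. Either argument works, but as written yours is only a plan.
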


\begin{proof}
Huang's theorem gives the required lower bounds for $n \neq 4$. For $Q_2$, take the whole cube.  For $Q_3$, delete an antipodal pair; the remaining six vertices induce a cycle.

We use Lemma~\ref{lem:parity-twist-excess} for the other upper bounds.
Define $F_5:\{0,1\}^5\to\{0,1\}$ by declaring $F_5(x)=1$ when $|x|\le2$ or when
$$
\operatorname{supp}(x)\in\mathcal T=
 \bigl\{\{1,2,5\},\{1,3,4\},\{2,3,4\},\{3,4,5\}\bigr\}.
$$
The four triples in $\mathcal T$ cover every pair of $[5]$, and no
four-subset contains all four triples.  A zero of weight three has exactly three neighbors in the one-set; a zero of weight four has at most three selected triple neighbors; a zero of weight five has no neighbors in the one-set.  At a vertex of weight two, $F_5=1$, and at most two of its three neighbors of weight three have value zero, since every pair is contained in a triple of $\mathcal T$. At a selected triple, only its two neighbors of weight four have value zero. Therefore
$$
s_0(F_5)\le3,\quad s_1(F_5)\le2,\quad A(F_5)=1-5+10-4=2.
$$
This settles $n=5$.

Define $g_3:\{0,1\}^3\to\{0,1\}$ by setting $g_3(x)=1$ exactly when $|x|\le 1$. Then
$$
 s_0(g_3)=s_1(g_3)=2, \quad A(g_3)=1-3=-2.
$$
The Boolean $\operatorname{OR}_3$ function has alternating sum $A(\operatorname{OR}_3)=-1$ and $s_0(\operatorname{OR}_3)=3$, $s_1(\operatorname{OR}_3)=1$. Hence, the function
$$
 F_6=g_3\wedge\operatorname{OR}_3
$$
on two disjoint three-coordinate blocks has $A(F_6)=2$ and $s(F_6)\le 3$.  This settles $n=6$.

On four variables, fix $P\in\binom{[4]}2$ and define $g_4=0$ exactly on the vectors of weight at most one and on the vector with support $P$.
Then
$$
s_0(g_4)\le 3,\quad s_1(g_4)\le 2,\quad  A(g_4)=2.
$$
Indeed, a zero singleton outside $P$ has three one-neighbors, while all other zero cases have at most two; a one of weight two has at most its two lower zero-neighbors. Ones of weights three and four have sensitivity at most one and zero, respectively. It follows that
$$
 F_7=g_4\wedge\operatorname{OR}_3
 \text{ and }
 F_8=F_5\wedge\operatorname{OR}_3
$$
have $|A(F_7)|=|A(F_8)|=2$ and sensitivities at most
three. They settle $n=7,8$.

For the upper bound in $Q_4$, let $W=\{000,001,010,101,110\}\subseteq V(Q_3)$.
Then $|W|=5$ and $\Delta(Q_3[W])=2$, so
$W\times\{0,1\}\subseteq Q_4$ has ten vertices and induced maximum degree
three.

It remains to exclude induced maximum degree two.  Let $E,O$ be the two
parity classes of $Q_4$, let $U\subseteq V(Q_4)$ have size ten, and suppose
to the contrary that $\Delta(Q_4[U])\le2$.  Interchanging $E$ and $O$
if necessary, put
$$
 X=U\cap O,
 \quad a=|X|\le5,
 \quad Y=E\setminus U.
$$
Since $|U|=10$ and $|E|=8$, we have $|Y|=a-2$. Each vertex of $X$ has
at least two neighbors in $Y$, so
$$
 2a\le e(X,Y)\le 4(a-2), 
$$
which implies $a=4$ or $a=5$.

If $a=4$, equality holds throughout, so every vertex of $X$ is a common
neighbor of the two vertices of $Y$.  This is impossible: two distinct
vertices in one parity class of $Q_4$ have two common neighbors when their
distance is two and none when their distance is four.

If $a=5$, then $|Y|=3$ and at least five odd vertices have two or more neighbors in $Y$.  For $v\in O$, write $d_Y(v)=|N(v)\cap Y|$.  Counting pairs consisting of an odd vertex and two of its neighbors in $Y$ gives
$$
 \sum_{v\in O}\binom{d_Y(v)}2
 =\sum_{\{y,y'\}\in\binom Y 2}|N(y)\cap N(y')|.
$$
At least five vertices of $X$ contribute one each, so the left-hand side is at least five. If $Y$ contains an antipodal pair, that pair contributes zero on the right, and each of the other two pairs contributes at most two, a contradiction.  Hence all three
pairs in $Y$ are at distance two. By translating one vertex of $Y$ to $0000$, the other two vertices have weight two and supports intersecting in exactly one coordinate. Permuting the coordinates, we may therefore assume that
$$
 Y=\{0000,1100,1010\}.
$$
The multiset $\{d_Y(v):v\in O\}$ is $\{3,2,2,2,1,1,1,0\}$,
and the odd vertices with at least two neighbors in $Y$ are
$1000,0100,0010,1110$.  Thus only four odd vertices have at least two
neighbors in $Y$, again a contradiction. Therefore $f_2(Q_4)=3$.
\end{proof}

Combining Propositions~\ref{prop:cube-excess-two-large}
and~\ref{prop:cube-excess-two-small} determines $f_2(Q_n)$ in every dimension $n\ge2$ except $n=9$. In that dimension, our bounds are
$$
 3\le f_2(Q_9)\le4.
$$
For the upper bound, take the $130$-vertex set of maximum degree three in
$Q_8$ supplied above, delete one vertex, and take its Cartesian product
with $K_2$.  The result has $258=2^8+2$ vertices and maximum degree at most
four. The exact value in dimension nine is not determined here.

\section{Concluding remarks}\label{sec:discussion}

For $r\ge1$, Zeng and Hou~\cite{ZeHo2024} proved
$$
 f(P_{2r}^{\square k})
 \ge
 \left\lceil
  2\sin\!\left(\frac{\pi}{4r+2}\right)\sqrt{k}
 \right\rceil.
$$
When $r\ge2$, the coefficient of $\sqrt{k}$ is strictly less than $1$.
Theorem~\ref{thm:even-paths-intro} therefore shows that this spectral lower
bound is not asymptotically sharp and, in particular, answers negatively the
tightness question posed in~\cite{ZeHo2024}.  The obstruction forcing the
larger value is local: the perfect matchings partition the entire product
into cubes, one of which must receive a strict majority.

The upper-bound lift is equally local.  Lemma~\ref{lem:bridge-gate} replaces
each matched factor by one binary coordinate while preserving an imbalance of
two in the final coloring.  The bridge assumption ensures that all remaining
edges reverse color.  It is natural to ask how far this hypothesis can be
relaxed.

\begin{problem}\label{prob:matched-factors}
Characterize the connected bipartite graphs $B$ with perfect matchings such
that
$$
 f(B^{\square k})=\lceil\sqrt{k}\rceil
 \qquad\text{for every }k\ge1.
$$
\end{problem}

Theorem~\ref{thm:matching-cut-ft} already extends the matched-bridge
construction to factors admitting a matching cut with nonzero bipartition
imbalance.  It remains to determine how far this sufficient condition is
from being necessary.  More generally, one may ask which classes of factors
have the same property for products of distinct members.

The fixed-excess parameter agrees with the prescribed-density parameter
of Garc\'ia-Marco and Knauer~\cite{GaMK2022} at
$\beta=(\alpha(G)+t)/|V(G)|$; this follows by deleting vertices if necessary.
Zeng and Hou~\cite{ZeHo2024} recorded Huang's question about
$\alpha(G)+t$, and Kriloff and Tolman~\cite{KrTo2026} call the
corresponding parameter $k$-sensitivity.
The two-sided imbalance parameter used
in~\cite{GaMK2022} is related but imposes a stronger condition.
For a bipartite graph $G$, write
$$
 \iota_D(G)=\max\bigl\{|A|-|B|:
 V(G)=A\sqcup B,\ 
 \Delta(G[A])\le D,\Delta(G[B])\le D\bigr\}.
$$
For a bipartite graph with a perfect matching, $\alpha(G)=|V(G)|/2$,
and hence
$$
 \iota_D(G)\ge 2t\quad\Longrightarrow\quad f_t(G)\le D.
$$
The converse need not hold, even in this graph class.

\begin{example}\label{ex:one-sided-versus-two-sided}
Let $G$ have bipartition $\{u,a,b,c\}\sqcup\{v,x,y,z\}$ and edge set
$$
 E(G)=\{uv,ux,uy,uz,va,vb,vc,ax,ay,by,cz\}.
$$
Then $f(G)=1$ but $\iota_1(G)=0$.
\end{example}
\begin{proof}
The edges $uv,ax,by,cz$ form a perfect matching, so $\alpha(G)=4$.
The set $\{a,b,c,x,z\}$ induces two independent edges and an isolated
vertex, giving $f(G)=1$.
A set of five vertices inducing maximum degree at most one cannot contain
$u$: it would have to contain all of $a,b,c$ and exactly one neighbor of
$u$, each choice producing a vertex of degree at least two.
The same argument excludes $v$.
The remaining six vertices induce the path $xayb$ and the edge $cz$.
Thus the only possible five-vertex sets are
$\{a,b,c,x,z\}$ and $\{b,c,x,y,z\}$, whose complements both induce $P_3$.
There is no larger set inducing maximum degree at most one.
Consequently no such two-sided coloring has unequal class sizes, whereas
the bipartition itself gives imbalance zero.
\end{proof}

Theorem~\ref{thm:fixed-excess-intro} (i) determines $f_t$ for products of trees with perfect matchings whenever
$$
 1\le t\le2^{D-\lceil\log_2D\rceil-1},
 \quad D=\lceil\sqrt{k}\rceil.
$$
The coding construction supplies many vertices of the same bipartition
sign with disjoint neighborhoods; the graph-theoretic construction then
converts them into the required excess while preserving the degree bound.
Theorem~\ref{thm:matching-cut-ft} gives a second exact regime, while
Theorem~\ref{thm:fixed-excess-intro} (ii) removes the asymptotic restriction at excess two as soon as one factor is not $K_2$.

The remaining questions concern products with few factors.  Our results
determine $f_2(Q_n)$ for all $n\ge2$ except $n=9$.

\begin{problem}\label{prob:excess}
Determine whether $f_2(Q_9)=3$ or $4$.  More generally, sharpen the factor threshold in Theorem~\ref{thm:fixed-excess-intro} (i), and determine $f_t$ in the remaining finite-factor cases for hypercubes, Cartesian products of trees with perfect matchings, and Cartesian products of even-order paths.
\end{problem}

A cube partition supplies a lower bound, but this bound need not be sharp. For instance, $C_4\cong Q_2$ and hence $C_4^{\square3}\cong Q_6$, whose sensitivity is three, while partitioning along one perfect matching in each $C_4$ gives only the lower bound two. It remains to identify when a cube partition gives a sharp bound and when a coloring of its cubes extends to the whole product with the same degree bound. These questions differ because our lifts control both color classes, whereas $f$ is defined using only one vertex set.

\section*{Acknowledgement}

The research of Zhen-Mu Hong is supported by Natural Science Foundation of China (No. 12371338) and Outstanding Youth Scientific Research Projects of Anhui Provincial Department of Education (No. 2022AH030073). The research of Zheng-Jiang Xia is supported by
Key Projects in Natural Science Research of Anhui Provincial Department of Education (No. 2023AH050268).


\begin{thebibliography}{99}

\bibitem{AlZh2020}
N.~Alon and K.~Zheng,
Unitary signings and induced subgraphs of Cayley graphs of $\mathbb Z_2^n$,
\emph{Adv. Comb.} (2020), Paper No. 11, 12 pp. \href{https://doi.org/10.19086/aic.17912}{https://doi.org/10.19086/aic.17912}

\bibitem{AFGK2026}
S.~Asensio, Y.~Filmus, I.~Garc\'ia-Marco, and K.~Knauer,
Sensitivity and Hamming graphs,
\emph{J. Graph Theory} \textbf{112} (2026), 296--305. \href{https://doi.org/10.1002/jgt.70034}{https://doi.org/10.1002/jgt.70034}

\bibitem{BJKST2013}
B.~Bre\v{s}ar, M.~Jakovac, J.~Katreni\v{c}, G.~Semani\v{s}in, and A.~Taranenko,
On the vertex $k$-path cover,
\emph{Discrete Appl. Math.} \textbf{161} (2013), 1943--1949. \href{https://doi.org/10.1016/j.dam.2013.02.024}{https://doi.org/10.1016/j.dam.2013.02.024}

\bibitem{CFGS1988}
F.~R.~K.~Chung, Z.~F\"uredi, R.~L.~Graham, and P.~Seymour,
On induced subgraphs of the cube,
\emph{J. Combin. Theory Ser. A} \textbf{49} (1988), 180--187. \href{https://doi.org/10.1016/0097-3165(88)90034-9}{https://doi.org/10.1016/0097-3165(88)90034-9}

\bibitem{GaMK2022}
I.~Garc\'ia-Marco and K.~Knauer,
On sensitivity in bipartite Cayley graphs,
\emph{J. Combin. Theory Ser. B} \textbf{154} (2022), 211--238. \href{https://doi.org/10.1016/j.jctb.2022.01.002}{https://doi.org/10.1016/j.jctb.2022.01.002}

\bibitem{HoLL2021}
Z.-M.~Hong, H.-J.~Lai, and J.-B.~Liu,
Induced subgraphs of product graphs and a generalization of Huang's theorem,
\emph{J. Graph Theory} \textbf{98} (2021), 285--308. \href{https://doi.org/10.1002/jgt.22692}{https://doi.org/10.1002/jgt.22692}

\bibitem{Huang2019}
H.~Huang,
Induced subgraphs of hypercubes and a proof of the Sensitivity Conjecture,
\emph{Ann. of Math.} \textbf{190} (2019), 949--955. \href{https://doi.org/10.4007/annals.2019.190.3.6}{https://doi.org/10.4007/annals.2019.190.3.6}

\bibitem{KrTo2026}
C.~Kriloff and J.~Tolman,
Graph sensitivity under join and decomposition,
preprint, arXiv:2512.19915v4, 2026. \href{https://arxiv.org/abs/2512.19915v4}{https://arxiv.org/abs/2512.19915v4}

\bibitem{JaTa2013}
M.~Jakovac and A.~Taranenko,
On the $k$-path vertex cover of some graph products,
\emph{Discrete Math.} \textbf{313} (2013), 94--100. \href{https://doi.org/10.1016/j.disc.2012.09.010}{https://doi.org/10.1016/j.disc.2012.09.010}

\bibitem{LiZu2018}
Z.~Li and L.~Zuo,
The $k$-path vertex cover in Cartesian product graphs and complete bipartite graphs,
\emph{Appl. Math. Comput.} \textbf{331} (2018), 69--79. \href{https://doi.org/10.1016/j.amc.2018.03.008}{https://doi.org/10.1016/j.amc.2018.03.008}


\bibitem{Tika2022}
A.~Tikaradze,
Induced subgraphs of powers of oriented cycles,
\emph{Linear Multilinear Algebra} \textbf{70} (2022), 3301--3303. \href{https://doi.org/10.1080/03081087.2020.1836116}{https://doi.org/10.1080/03081087.2020.1836116}

\bibitem{ZeHo2024}
J.~Zeng and X.~Hou,
On induced subgraph of Cartesian product of paths,
\emph{J. Graph Theory} \textbf{107} (2024), 169--180. \href{https://doi.org/10.1002/jgt.23116}{https://doi.org/10.1002/jgt.23116}

\end{thebibliography}
\end{document}